\newcommand{\Real}{\mathbb R}
\newcommand{\eps}{\varepsilon}
\newcommand{\one}[1]{  {1}_{\{#1\}}}
\renewcommand{\P}{\mathbb{P}}
\newcommand{\E}{\mathbb{E}}
\newcommand{\var}{\mathrm{Var}}
\newcommand{\Z}{\mathbf{Z}}
\newcommand{\X}{\mathbf{X}}
\newcommand{\Y}{\mathbf{Y}}
\newcommand{\U}{\mathbf{U}}
\renewcommand{\L}{\mathbf{L}}
\newcommand{\x}{\mathbf{x}}
\newcommand{\y}{\mathbf{y}}
\newcommand{\f}{\mathbf{f}}
\newcommand{\g}{\mathbf{g}}
\newcommand{\h}{\mathbf{H}}
\newcommand{\bdelta}{\boldsymbol{\delta}}
\newcommand{\overbar}[1]{\mkern 1.5mu\overline{\mkern-1.5mu#1\mkern-1.5mu}\mkern 1.5mu}
\journalname{Journal of Mathematical Biology}
\begin{document}

\title{On the establishment of a mutant
\thanks{This work has been supported by the Australian Research Council Grant DP150103588.}
}
%



\author{Jeremy Baker  \and
        Pavel Chigansky \and
        Peter Jagers \and
        Fima Klebaner               
}

\authorrunning{J. Baker et al} 

\institute{J. Baker \at
              School of Mathematics,
              Monash University, Monash, VIC 3800, Australia \\
              \email{jeremy.baker@monash.edu}           
           \and
           P. Chigansky \at
              Department of Statistics, 
              The Hebrew University, Mount Scopus, 
              Jerusalem 91905, Israel \\
              \email{Pavel.Chigansky@mail.huji.ac.il}
           \and
              P. Jagers \at
              Mathematical Sciences, 
              Chalmers University of Technology and \\
              University of Gothenburg, 
              SE-412 96 Gothenburg, Sweden \\
              \email{jagers@chalmers.se}
           \and 
              F. Klebaner \at
              School of Mathematics,
              Monash University, 
              Monash, VIC 3800, 
              Australia \\
              \email{fima.klebaner@monash.edu}       
}

\date{Received: date / Accepted: date}

\maketitle

\begin{abstract}
How long does it take for an initially advantageous mutant to 
establish itself in a resident population,  and what does the
population composition look like then? We approach these questions in
the framework of the so called Bare Bones evolution model \cite{KleVa}
that provides a simplified approach to the adaptive population
dynamics of binary splitting cells. As the mutant 
population grows, cell division becomes less probable,
and it  may in fact turn less likely than that of residents. 

Our analysis rests on the assumption of the process starting from
resident population, with sizes proportional to a large carrying
capacity $K$. Actually, we assume carrying capacities to be
$a_1K$ and $a_2K$ for the resident and the mutant
populations, respectively, and study the dynamics for $K\to\infty$. We
find conditions for the
mutant to be successful in establishing itself  alongside the
resident. The time it takes turns out to be  proportional to $\log
K$. We introduce the time of establishment through the  asymptotic behaviour 
of the stochastic nonlinear dynamics describing the evolution, and
show that it is indeed $\log K/\log \rho$, where $\rho>1$ is twice
the probability of successful division of the mutant at its
appearance. Looking at the composition of the population, 
at times $\log K/\log \rho +n, n \in \mathbb{Z}_+$, we find that
the densities (i.e. sizes relative to carrying capacities) of  
both populations follow closely the corresponding two dimensional
nonlinear deterministic dynamics  that starts at  
{\it a random point}. We characterise this random initial condition
in terms of the scaling limit of the corresponding 
dynamics, and the limit of the properly scaled initial binary splitting process of the mutant. 
The deterministic approximation with random initial condition is in fact valid asymptotically
at all times $\log K/\log \rho +n$ with $n\in \mathbb{Z}$.

\keywords{evolution models  \and  stochastic dynamics \and limit theorems}
\subclass{92D25 \and 60J80 \and 60F17}
\end{abstract}

\section{Introduction}
There has been much work in stochastic adaptive dynamics and evolutionary branching, see 
\cite{DL96}, \cite{Metz96}, \cite{CFA}, \cite{CM}, \cite{Serik}, to
mention just a few. Here we confine ourselves to a simple mathematical
model for evolution, where an established resident population is invaded by a mutant. 
From that moment on, the two populations compete for resources.  At
the moment of invasion the resident, wild-type, population is assumed
to have the size near its carrying capacity $a_1 K$. Here $K$ should be
thought of as large, and $a_1>0$ is fixed. The size of the mutant
population is initially negligible as compared to $K$, since it starts
from one individual. It has a reproductive advantage over the
resident, but as its progeny grows this advantage diminishes.
 
We want to answer the question of how long it takes for a mutant to
become established, i.e. to grow to a size comparable to the host
population. And what is the population composition then? 
Already the simplified model of two competing populations we consider,
will require new mathematical techniques and lead to insightful results.
We show that the deterministic approximation with a random initial condition 
is valid for times $[\log K/\log \rho] +n$ with any fixed $n\in \mathbb{Z}$ and a large $K$.
However, unlike in the classical case on deterministic approximation, \cite{Ku70} and \cite{B79}, 
some stochasticity remains and enters as a random initial condition. 

\subsection{The Bare Bones evolutionary model.} 
This simple but basic model of species reproducing under interaction
with their environment was  introduced in \cite{KleVa}. It builds upon
asexual binary splitting and evolves in discrete time. Thus, each 
individual either gets two children in the next generation or none.
However, interaction with environment and population size is allowed - in
contrast to classical stochastic approaches - but drastically condensed.
Following the idea of Malthus, populations reach sizes proportional
to available resources, and we assume that the 
the habitat is characterised by a {\em carrying capacity}, $K>0$,
thought of as large. Given the population size, individuals reproduce independently. 
 Initially only the resident, wild-type,  population is present and,
 at population size $z$, the individual probability of successful splitting  
is taken to be $a_1 K/(a_1 K + z)$. Here $a_1$ is a constant, which
determines the population size at its
macroscopic (quasi-)equilibrium: when $z=a_1 K$, the probability of
splitting is $1/2$. On the average, thus, a population of this size
produces one child per individual. As a result, the population size
fluctuates around this (quasi-)steady state for what is presumably a
very long time, cf. \cite{JKling}.

In that stage, the population will experience its first mutation giving rise to
a new population. The new, mutant population starts from a single individual,
its ancestor. The basis of adaptive dynamics can then be said to be
furnished by the branching mechanism, which forces the new population
to either die out or else embark on exponential growth, in
which case the old resident dies out, or the two populations will
coexist for a time span that turns out to be exponential in the carrying capacity.

Mathematically, this dynamics can be described as follows. 
The branching process starts from a pair of positive integers
$\Z(0)=\big(Z_1(0),Z_2(0)\big)$, the first component denoting
the size of the resident and the second that of the mutant population,
at time 0, when the mutation appears. We assume that the established
original population is at equilibrium at the moment of invasion $n=0$,
$Z_1(0)=[a_1K]$, and  $Z_2(0)=1$. Each population
develops by binary splitting with probabilities dependent on the numbers of cells, with
transitions from generation $n$ to $n+1$ described by the recursion 
\begin{equation}\label{Zprocess}
{\Z}(n+1)=\Big( Z_1(n+1),Z_2(n+1)\Big) =\bigg(
\sum_{k=1}^{Z_{1}(n)}\xi_1 (n,k),
\sum_{k=1}^{Z_{2}(n)}\xi_2 (n,k)\bigg).
\end{equation}%
The random variables $\xi_i(n,k)\in \{0,2\}$ are 
independent, given the preceding, and only depend upon the last
generation $\Z(n)$, with probabilities 
\begin{equation}\label{distr}
\begin{aligned}
\P\Big( \xi_1(n,k)=2|  \Z(n)\Big) &=&%
\frac{a_{1}K}{a_{1}K+Z_1(n)+\gamma Z_2(n)},\\
\P\Big( \xi_2(n,k)=2|  \Z(n)\Big)  &=&
\frac{a_{2}K}{a_{2}K+\gamma Z_1(n)+Z_2(n)},
\end{aligned}
\end{equation}
where $a_2>0$ is the parameter, which controls the mutant equilibrium population size, and $\gamma$ is 
the interaction coefficient, assumed to satisfy
$0<\gamma<1$. The biological meaning of $\gamma$ is that cells of one
type encroach less upon the reproduction of the other cell type than do cells of the same
type. That $\gamma$ is the same  in both probabilities means that influence is symmetric between the cell types. 

In the absence of mutants, the established population thus has a
critical reproduction, whereas the mutant population starts
supercritically, provided $a_2>\gamma a_1$, as is assumed throughout the paper, see \eqref{coex} below.


\subsection{Stochastic nonlinear dynamics for the evolution of the density.}
Important insights  into the  behaviour of populations with state
dependent reproduction and large carrying capacity is provided by
their {\em density process}, \cite{Kleb84}, \cite{Kleb93}. It allows
representation of the process as having stochastic nonlinear dynamics,
which can be separated 
into a deterministic part and a random  perturbation. 
This is useful not only for the mathematical analysis but also for
the biological interpretation.

The density process is the population sizes relative to $K$
$$
\X(n)=\big(X_1(n),X_2(n)\big):=\big(Z_1(n)/K,Z_2(n)/K\big).
$$
Note that the splitting probabilities (and hence the
offspring distributions) in \eqref{distr} are in fact
functions of the density; denoting the  density state by $\x=(x_1,x_2)$ we see that 
\begin{align*}
\P\left(\xi_1(n,k)=2|  \X(n) =  \x\right) &= \frac{a_{1}}{a_{1}+x_{1}+\gamma x_{2}}, \\
\P\left(\xi_2(n,k)=2|  \X(n) =  \x\right)& =\frac{a_{2}}{a_{2}+\gamma x_{1}+x_{2}}.
\end{align*}
Accordingly,  the offspring  mean $\mathbf{m}(\x)= \big(m_1(  {\x}),
m_2(  {\x})\big)$  at $\x$  is also a function of the density
\begin{align*}
m_{1}(\x) &=\E\big(\xi_1(n,k)|  \X(n)=  {\x}\big)=\frac{2a_{1}}{a_{1}+x_{1}+\gamma x_{2}}, \\
m_{2}(\x) &=\E\big(\xi_2(n,k)|  \X(n)=  {\x}\big)=\frac{2a_{2}}{a_{2}+\gamma x_{1}+x_{2}}.
\end{align*}

The underlying deterministic dynamics 
\begin{equation}  \label{map2d}
\x(n+1)= \f \big( \x(n)\big),
\end{equation}
is determined by the function 
$\f(\x)=\big(f_1(\x), f_2(\x)\big)$,
\begin{equation}\label{dynf}
\begin{aligned}
f_1( \x ) &= x_1 m_1(\x)= \frac{2x_1a_1}{a_1+x_1+\gamma x_2}, \\
f_2(\x) &= x_2 m_2(\x) = \frac{2x_2a_2}{a_2+\gamma x_1+ x_2}.
\end{aligned}
\end{equation}
This can be easily seen from \eqref{Zprocess} by writing the density process as
\begin{equation}\label{density2d}
\begin{aligned}
X_1(n+1) &=  X_1(n)m_{1}\big( \X(n)\big)   + \frac{1}{K}\sum_{j=1}^{KX_1(n)}\Big(\xi_1(n,j)-m_{1}\big(   \X (n)\big)\Big)  
 \\
X_2(n+1) &=   X_2(n) m_2\big(\X(n)\big) + \frac{1}{K}\sum_{j=1}^{KX_2(n)}\Big(\xi_2(n,j) -m_{2}\big(  \X(n) \big)\Big).
\end{aligned}
\end{equation}
The first term on the r.h.s. of \eqref{density2d} gives the deterministic dynamics \eqref{map2d}, 
and the second term acts as the random perturbation, 
\begin{equation}\label{density2d1}
\X(n+1) =\f\big(\X(n)\big)+\frac{1}{\sqrt{K}} \boldsymbol{\eta} (n+1,\X_n),
\end{equation}
with 
$$
\eta_i(n+1, \x)=\frac{1}{\sqrt K}\sum_{j=1}^{K x_i}\big(\xi_i(n,j)-m_i(\x)\big),\quad i=1,2.
$$
These random variables have zero mean and variance $4x_ip_i(\x)\big(1-p_i(\x)\big)$, where $p_i(\x)$ are the splitting probabilities. 
Therefore the random noise term  in \eqref{density2d1} is of order
$1/\sqrt K$ and the density process can indeed be viewed as generated
by a nonlinear dynamical system, perturbed by a small random disturbance.

Note that in the view of the above discussion, the trajectory of the deterministic system \eqref{map2d} 
depends on $K$ through the initial condition  $\x^K(0)=\big(\frac{[a_1K]}{K}, \frac 1 K\big)$. Similarly, the process generated by the stochastic 
dynamics \eqref{density2d1}, depends on $K$ through  $\X^K(0)=\big(\frac{[a_1K]}{K}, \frac 1 K\big)$ and the noise term. Whenever appropriate, we will leave 
this dependence implicit, omitting it from the notation.

\subsection{Deterministic dynamics }
If we neglect the small random noise in \eqref{density2d1}, we obtain the deterministic dynamics \eqref{map2d}. 
Fixed points (solutions to $\f(\x)= \x$) play an important role in the behaviour of such systems. The trajectories are repelled from the unstable fixed points and attracted to the stable ones.   Our system, generated by the function $\f(\cdot)$ in \eqref{dynf}, has four fixed points,
\begin{equation}\label{stableFP}
\begin{aligned}
&\x^{\mathrm{ex}}=(0,0) & &\text{(total extinction equilibrium)}\\
&\x^{\mathrm{re}}=(a_1,0) & & \text{(resident equilibrium in absence of mutant)} \\
&\x^{\mathrm{mu}} = (0,a_2) & & \text{(mutant equilibrium in absence of resident)} \\
&\x^{\mathrm{co}} =\Big(\frac{a_1-\gamma a_2}{1-\gamma^2}, \frac{a_2-\gamma a_1}{1-\gamma^2}\Big) & & \text{(coexistence equilibrium)}
\end{aligned}
\end{equation}
Since we are concerned with both populations, the relevant case is
when both coordinates of $\x^{\mathrm{co}}$ are nonnegative. This is true if the
following {\em co-existence} condition holds
\begin{equation}\tag{C}\label{coex}
a_1-\gamma a_2>0,\;\;a_2-\gamma a_1>0.
\end{equation}

It is easy to see by examining  the Jacobian matrix $\nabla \f(\x)$, see \eqref{Adef} below, that the  point 
$\x^{\mathrm{co}}$ is stable, and $\x^{\mathrm{ex}}$  unstable. The points $\x^{\mathrm{re}}$ and $\x^{\mathrm{mu}}$ are saddle points, 
that is, stable in one direction and unstable in another. In our theory the point 
$\x^{\mathrm{re}}=(a_1,0)$ plays a special role due to proximity of
the initial condition  $\big(\frac {[a_1 K]} K,\frac 1 K\big)$. In the absence of a mutant, $a_1$
is the stable equilibrium for the resident population, and $0$ is unstable for the mutant population. 

\subsection{The large capacity limit of the stochastic dynamics.}
A rigorous treatment for neglecting small noise is given by 
the classical results in perturbation theory of dynamical systems, see
e.g.  \cite{Ku70}, \cite{B79}, \cite{FW84}, \cite{Kif88}.   
They assert  that as the noise converges to zero, that is, when  $K\to\infty$,
the trajectory $\X^K(n)$ of the stochastic system \eqref{density2d1} converges on any {\em bounded} time interval to that of the
deterministic dynamics  \eqref{map2d}, started from the initial condition $\x(0) = \lim_{K\to \infty}\X^K(0)$.
Namely, for an arbitrary but fixed integer $N$,
\begin{equation}\label{clim}
\max_{n \le N} \big|\X^K(n)-\x(n)\big|\xrightarrow[K\to\infty]{\P}0.
\end{equation}

In our setup,    the initial condition turns out to be the fixed point $\x^{\mathrm{re}}$,
$$
\x(0) =\lim_{K\to\infty} \X^K(0)=\lim_{K\to\infty} \big(\tfrac{[a_1 K]} K,\tfrac 1 K\big)= (a_1,0)=\x^{\mathrm{re}}.
$$  
Therefore the corresponding limit trajectory is constant,  $\x(n)=\x^{\mathrm{re}}$ for all $n=1,2,...$
Consequently, the limit \eqref{clim} fails to provide any information on the transition 
to a new coexistence equilibrium. We shall see that if such a transition occurs, it
becomes visible much later, at a  time increasing with $K$, in fact, of order $\log K$.

Recently, limit theorems, capable of capturing this transition, were obtained in
\cite{BHKK15}, \cite{BCK16}, \cite{CJK18}, \cite{BCHK}. They involve 
a time shift which grows logarithmically in $K$. 
In \cite{BHKK15} this shift is {\em random} and the process $\X^K(n)$ is
approximated by the trajectory of the deterministic system \eqref{map2d} with a random shift. We have learnt from a referee that
a precursor to random shift theory in \cite{BHKK15} in the context of epidemic models can be found in \cite{Metz}, where    precise conjectures  were stated and later proved in an unpublished manuscript  for the simple SIR epidemic model, \cite{Altman}, \cite{Mollison}.  

In \cite{BCK16}, \cite{CJK18}, \cite{BCHK}, the shift is {\em deterministic}, and  $\X^K(n)$ converges to a trajectory of 
\eqref{map2d},  started from a {\em random} initial condition.  
While the two approaches, the random shift and the random initial condition, are related, they  are not equivalent.  
The main building block in the random initial condition theory  is a
certain scaling limit of the deterministic flow, which
does not appear in the random shift theory. Existence of this limit
was so far established only in the one dimensional case. 

This work is
the first such result in two dimensions. Having established it, we
can complement the ``random shift'' picture in \cite{BHKK15} with that of ``random initial condition'' for the Bare Bones model.  Recently heuristics for similar random initial conditions for selective sweeps in large populations in one dimension were given in \cite{ML}. Other stochastic approaches involving 
carrying capacity can be found in \cite{L05}, \cite{L06}.

\section{Main results}
In what follows we consider the stochastic process $\X^K(n)$  generated by \eqref{density2d1} or, equivalently, 
by \eqref{Zprocess}.  As mentioned in Introduction, the resident population initially has a critical reproduction, 
and is at equilibrium, when a single mutant appears, so that $\X^K(0)=\big(\frac{[a_1 K]} K,\frac 1 K\big)\approx \x^{\mathrm{re}}=(a_1,0)$. 
Even though the probability of a mutant present at any time $n$ is positive, $\P(X_2^K(n)  >0)>0$, we do not say that it 
established itself until its numbers are proportional to its carrying capacity, in other words proportional to $K$. This can be formalized as
$$
\liminf_{K\to\infty} X_2^K(n) >0.
$$

For example, as we have seen above $\X^K(n)\to  \x^{\mathrm{re}}=(a_1,0)$ for any fixed $n$ as $K\to\infty$. 
This conveys that the mutant is not established by any fixed time $n$. We show however, that it may
establish itself at a time, which grows logarithmically with $K$.
More precisely, we prove that at time $[b\log K]$ with a certain constant $b$, 
\begin{equation}\label{limpos}
 \liminf_{K\to\infty} X_2^K\big([b\log K]\big) >0,\;\;(\mbox{and}\,\, \limsup_{K\to\infty} X_2^K\big([b\log K]\big)<\infty),
\end{equation}
whereas for $0< r < b$,
\begin{equation}\label{lim0}
\lim_{K\to\infty}\X^K([r\log K])\to  \x^{\mathrm{re}},
\end{equation}
and, therefore, 
$
X_2^K \big([r\log K]\big)  \xrightarrow[K\to \infty]{\P} 0,
$
in particular.

The logarithmic order of time of the mutant's establishment  can be roughly explained as follows. As the process starts near 
$\x^{\mathrm{re}}=(a_1,0)$, the state dependent splitting probabilities can be approximated, at least initially, by their values at 
$\x^{\mathrm{re}}$, giving probabilities of division $1/2$ and $ a_2 /( a_2+\gamma a_1)$ for the resident and the mutant populations 
respectively. Note that due to coexistence condition \eqref{coex},  the mutant process is supercritical with mean  
$$\rho = \frac{2a_2}{a_2+\gamma a_1} >1.$$ 
Hence it grows at the rate $\rho^n$, and it takes time 
$$
b\log K+O(1), \text{ with } b:=\frac{1}{\log\rho}
$$
for it to grow to the size 
proportional to $K$, as $K\to\infty$. In fact, this heuristics is correct, and made precise in the following result, which implies both 
\eqref{limpos} and \eqref{lim0}. We denote the fractional part of $x\in \Real_+$ by $\{x\}$.

\begin{theorem}\label{thm0} 
There exist a non-degenerate scalar random variable $W\ge 0$ and a function $\h(\x)$, whose entries are positive on 
the open half-plane  $\Real\times \Real_+$, such that
\begin{equation}\label{lim2}
\X^K \big([b\log K]\big) -  \h\left(\rho^{-\{\log_\rho K\}}\begin{pmatrix} 0 \\ W
\end{pmatrix}
\right)\xrightarrow[K\to \infty]{\P}  {\bf 0}.
\end{equation}
In particular,  along the subsequence of exact powers $K_j = \rho^j$, $j\in\mathbb{N}$,  
$$
\X^{K_j} \big(b\log K_j\big) \xrightarrow[j\to \infty]{\P}  \h\left(\begin{pmatrix} 0 \\ W
\end{pmatrix}
\right).
$$
 \end{theorem}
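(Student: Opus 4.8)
The plan is to split $\{0,1,\dots,[b\log K]\}$ into a long \emph{branching phase} of length $[b\log K]-\ell$ and a short \emph{relaxation phase} of fixed length $\ell$, analyse the first by coupling the mutant to a supercritical Galton--Watson (GW) process and the second by the bounded-time deterministic approximation \eqref{clim}, and then let $\ell\to\infty$ in a diagonal argument; the map $\h$ will be a scaling limit of the deterministic flow \eqref{map2d} near the saddle $\x^{\mathrm{re}}=(a_1,0)$. A direct computation from \eqref{dynf} shows that $\nabla\f(\x^{\mathrm{re}})$ is upper triangular with eigenvalues $1/2$ (resident direction, contracting) and $\rho>1$ (mutant direction $\mathbf v$, normalised to have second coordinate $1$, expanding), so $\x^{\mathrm{re}}$ is a saddle with a one-dimensional unstable manifold $W^u$ tangent to $\mathbf v$.

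\emph{Branching phase.} While the mutant is rare the two splitting probabilities in \eqref{distr} equal $1/2$ and $a_2/(a_2+\gamma a_1)=\rho/2$ up to errors $O\big((|Z_1(n)-a_1K|+Z_2(n))/K\big)$, so a generation-by-generation coupling ties $\Z(n)$ to a pair of independent branching processes --- a critical binary one started from $[a_1K]$ for the resident, and a supercritical binary one $\widehat Z$ with mean $\rho$ for the mutant --- agreeing with probability tending to $1$ up to any time $t_K\to\infty$ for which $Z_2(t_K)=o(K^{1-\delta})$. To carry the control all the way to $n_K:=[b\log K]-\ell$, where $Z_2(n_K)$ is already of order $K\rho^{-\ell}\rho^{-\{\log_\rho K\}}$, I would pass to the nonnegative martingale $M_n:=Z_2(n)\big/\prod_{j<n}m_2(\X^K(j))$: a stopping-time a~priori estimate keeping $\X^K$ within $\rho^{-\ell/2}$ of $\x^{\mathrm{re}}$ over the $O(\log K)$ horizon (the one genuinely delicate point of this phase) gives $\prod_{j<n}m_2(\X^K(j))=\rho^{n}(1+O(\rho^{-\ell}))$ uniformly in $n\le n_K$, while $M_n$ has conditional variances of order $\rho^{-n}$ and hence converges in $L^2$ to a limit $W\ge0$ that, by the earlier coupling and the Kesten--Stigum theorem, equals $\lim_n\widehat Z(n)/\rho^n$; in particular $W$ is non-degenerate with $\P(W=0)\in(0,1)$, since $\rho>1$ from \eqref{coex} makes $\widehat Z$ supercritical with extinction probability below $1$. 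Using $\rho^{n_K}/K=\rho^{-\ell}\rho^{-\{\log_\rho K\}}$ and that, to leading order, the resident coordinate is slaved to the mutant one along $\mathbf v$, this shows that for each fixed $\ell$, as $K\to\infty$, $\X^K([b\log K]-\ell)$ converges in probability to $\x^{\mathrm{re}}+\rho^{-\ell}\rho^{-\{\log_\rho K\}}W\,\mathbf v$ up to an error $O(\rho^{-2\ell})$ --- equivalently, it sticks to $W^u$ at distance $\asymp\rho^{-\ell}$ from $\x^{\mathrm{re}}$.

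\emph{Relaxation phase and scaling limit.} Conditioning on $\mathcal F_{n_K}$ and using a form of \eqref{clim} uniform over initial conditions in a compact set, the fixed-length continuation gives $\X^K([b\log K])-\f^{\ell}\big(\X^K(n_K)\big)\xrightarrow[K\to\infty]{\P}\mathbf 0$, so by continuity of $\f^{\ell}$ it suffices to analyse $\f^{\ell}$ applied to points of the form $\x^{\mathrm{re}}+\rho^{-\ell}w\,\mathbf v+O(\rho^{-2\ell})$, $w=\rho^{-\{\log_\rho K\}}W$. The core step --- and, as the Introduction emphasises, the genuinely new ingredient --- is to establish the two-dimensional scaling limit: that
\begin{equation*}
\h(0,w):=\lim_{\ell\to\infty}\f^{\ell}\big(\x^{\mathrm{re}}+\rho^{-\ell}w\,\mathbf v\big)
\end{equation*}
exists for all $w>0$, is continuous in $w$, has positive entries, is uniform for $w$ in compact sets, and is insensitive to $o(\rho^{-\ell})$ perturbations of its argument --- and since $\f^{\ell}$ expands by $\asymp\rho^{\ell}$ near $W^u$, this robustness is exactly what absorbs the $O(\rho^{-2\ell})$ remainder above; one then extends $\h$ to the half-plane $\Real\times\Real_+$, with $\h(0,0)=\x^{\mathrm{re}}$ recovering \eqref{lim0}. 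Writing $s=\log_\rho w$, the function $s\mapsto\h(0,\rho^s)$ is the Koenigs--Poincar\'e parametrisation $\phi$ of $W^u$ normalised by $\f(\phi(s))=\phi(s+1)$ and $\phi(s)\sim\x^{\mathrm{re}}+\rho^{s}\mathbf v$ as $s\to-\infty$; along $W^u$ itself the limit is exact and $\ell$-independent, so the real work is (a) constructing $\phi$ and showing it continues as a curve with positive coordinates over the relevant range of $s$, a claim about the global phase portrait of \eqref{map2d} with $\x^{\mathrm{co}}$ attracting, and (b) showing that points slightly \emph{off} $W^u$ (as $\X^K(n_K)$ will be) are attracted to $\phi(\cdot)$ under $\f^{\ell}$ as $\ell\to\infty$, which requires controlling the nonlinear coupling between the expanding mutant direction and the resident direction contracting at rate $1/2$. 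I expect (a)--(b) to be the main obstacle; in one dimension this reduces to the classical linearisation at a repelling fixed point, and the contribution of the paper is to carry it out in two dimensions.

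\emph{Conclusion.} The rest is an $\varepsilon/2$ argument: given $\varepsilon>0$, first fix $\ell$ so large that (a)--(b) make $\big|\f^{\ell}\big(\x^{\mathrm{re}}+\rho^{-\ell}w\,\mathbf v+O(\rho^{-2\ell})\big)-\h(0,w)\big|<\varepsilon/2$ uniformly over the random bounded range of $w=\rho^{-\{\log_\rho K\}}W$, then take $K$ large enough that the branching and relaxation phases each contribute at most $\varepsilon/2$; this yields \eqref{lim2}. Finally, along $K_j=\rho^j$ one has $b\log K_j=j\in\mathbb N$, so $[b\log K_j]=b\log K_j$ and $\{\log_\rho K_j\}=0$, whence $\rho^{-\{\log_\rho K_j\}}=1$ and \eqref{lim2} reduces to $\X^{K_j}(b\log K_j)\xrightarrow[j\to\infty]{\P}\h\!\left(\begin{pmatrix}0\\W\end{pmatrix}\right)$.
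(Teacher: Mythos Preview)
Your approach is sound and, like the paper's, hinges on the two-dimensional scaling limit $\h$; but the probabilistic decomposition is genuinely different. The paper cuts not at $n_1-\ell$ with $\ell$ fixed but at $n_c=[c\log_\rho K]$ for a fixed $c\in(\tfrac12,1)$, so that \emph{both} phases have length of order $\log K$. At that cut the mutant density is still $K^{c-1}\to0$, keeping the branching phase entirely in an infinitesimal neighbourhood of $\x^{\mathrm{re}}$; the paper controls it by an explicit sandwich coupling $L_i\le Y_i,Z_i\le U_i$ with auxiliary Galton--Watson processes (means $\rho\pm O(K^{\alpha-1})$) and exit-time bounds, obtaining $\rho^{-n_c}\big(\Z(n_c)-\Y(n_c)\big)\to0$ and $\rho^{-n_c}\big(\Y(n_c)-K\x^{\mathrm{re}}\big)\to(0,W)$ directly---no intrinsic martingale $M_n$, no a~priori confinement estimate of your kind, no diagonal argument. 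The deterministic phase of growing length $n_1-n_c$ is then absorbed in a single $K\to\infty$ limit via the identity
\[
\f^{n_1-n_c}\big(\overbar\Y(n_c)\big)=\f^{n_1-n_c}\Big(\x^{\mathrm{re}}+\rho^{-(n_1-n_c)}\,\rho^{-\{\log_\rho K\}}\,\rho^{-n_c}\big(\Y(n_c)-K\x^{\mathrm{re}}\big)\Big)
\]
together with the uniform convergence of Theorem~\ref{thmH}; the stochastic error accumulated over those $\asymp(1-c)\log_\rho K$ steps is handled by a crude global Lipschitz bound $\|D\f\|\le\widetilde\rho$, which suffices because $c$ can be pushed close to $1$. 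Your cut trades a simpler bounded-time relaxation phase for a harder branching phase, since you must track the process until $X_2\sim\rho^{-\ell}$ is a fixed positive constant rather than $o(1)$; the a~priori estimate you flag, and your product bound $\prod_{j<n}m_2(\X^K(j))=\rho^n(1+O(\rho^{-\ell}))$, ultimately require exit-time control of both coordinates essentially equivalent to that in the paper's Section~\ref{sec:3.3}. Both routes work; the paper's keeps the coupling strictly in the linear regime and avoids the double limit $K\to\infty$ then $\ell\to\infty$.
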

 
Let us now detail about the random variable $W$ and the function $\h(\cdot)$ appearing in this theorem. 
The approximate mutant process, mentioned in the heuristic explanation above, has the same splitting probability as 
the mutant component of $\mathbf{Z}(n)$ at $\x^{\mathrm{re}}$. 
More precisely, it is a supercritical Galton-Watson binary splitting, started with a single ancestor, $Y(0)=Z_2(0)=1$, and for $n\ge 1$
defined iteratively by
\begin{equation}\label{Yn}
Y(n+1) = \sum_{j=1}^{Y(n)} \zeta(n,j),
\end{equation} 
where the offsprings $\zeta(n,j)\in \{0,2\}$ are i.i.d. random variables with the constant splitting probability 
$\P\big(\zeta(n,j) = 2\big)=\rho/2$. 

It is well known that $W(n) =  \rho^{-n} Y(n)$ is a non-negative martingale. As such it converges almost surely to a limit,
$$W=\lim_{n\to\infty} W(n),$$
which is the random variable appearing in \eqref{lim2}.

The function $\h(\cdot)$ in Theorem \ref{thm0} is the limit of the $n$-fold iterated map $\f^n(\cdot)$ 
along the unstable manifold of  the dynamics in \eqref{map2d}.

\begin{theorem}\label{thmH}\ Under the basic assumptions stated,
the limit 
\begin{equation}\label{limH}
\h(\x)= \lim_{n\to\infty} \f^n(\x^{\mathrm{re}}+\x/\rho^n), \quad  \x\in \Real\times \Real_+ 
\end{equation}
exists, and the convergence is uniform on compacts.
\end{theorem}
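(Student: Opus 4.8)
\emph{The plan is the following.} Write $\Phi_n(\x):=\f^n(\x^{\mathrm{re}}+\x/\rho^n)$, so the assertion is $\Phi_n\to\h$ uniformly on compacts. Two identities drive the argument. First, since $\x^{\mathrm{re}}+\x/\rho^{n+1}=\x^{\mathrm{re}}+(\x/\rho)/\rho^{n}$, there is the \emph{exact} recursion $\Phi_{n+1}(\x)=\f\bigl(\Phi_n(\x/\rho)\bigr)$, so the limit $\h$, if it exists, must solve $\h(\x)=\f\bigl(\h(\x/\rho)\bigr)$ with $\h(\mathbf 0)=\x^{\mathrm{re}}$. Second, $\Phi_{n+1}(\x)=\f^{n}\bigl(\f(\x^{\mathrm{re}}+\x/\rho^{n+1})\bigr)$, and a first order Taylor expansion of $\f$ at $\x^{\mathrm{re}}$ turns the inner map into an \emph{approximate} recursion for $\Phi_n$ evaluated at a \emph{shifted} point. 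The linear data is $A:=\nabla\f(\x^{\mathrm{re}})=\bigl(\begin{smallmatrix} 1/2 & -\gamma/2 \\ 0 & \rho \end{smallmatrix}\bigr)$, diagonalizable with eigenvalue $1/2$ (eigenvector $\mathbf e_s=(1,0)$, the stable axis $\{x_2=0\}$) and eigenvalue $\rho>1$ (eigenvector $\mathbf e_u=\bigl(\tfrac{\gamma}{1-2\rho},1\bigr)$). Hence $T:=\rho^{-1}A$ has the simple eigenvalue $1$ on $\mathbf e_u$ and the eigenvalue $(2\rho)^{-1}<1$ on $\mathbf e_s$, so $T^m\to\pi$ geometrically and uniformly on compacts, $\pi$ being the spectral projection onto $\Real\mathbf e_u$ along $\Real\mathbf e_s$, with $\pi\x=x_2\mathbf e_u$. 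The heuristic is then $\Phi_{n+1}(\x)\approx\Phi_n(T\x)$, which iterated collapses the dependence onto $\pi\x$; so $\h$ will depend on $\x$ only through $x_2$.

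\emph{A priori estimates.} The technical core is to control $\f^n$ along the orbits that occur. Using $\f(\x^{\mathrm{re}}+\u)=\x^{\mathrm{re}}+A\u+O(|\u|^2)$ and a Gronwall/bootstrap argument for $u_k:=|\f^k(\x^{\mathrm{re}}+\u)-\x^{\mathrm{re}}|$, one first shows that if $|\u|\le c_0\rho^{-n}$ then the orbit stays confined, $u_k\le M(|\u|\rho^n)\,\rho^{k-n}$ for $0\le k\le n$, with $M(\cdot)$ locally bounded — the point being that the multiplicative corrections involve partial sums of $\sum_{k\le n}\rho^{k-n}$, bounded uniformly in $n$. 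Substituting $\nabla\f(\x)=A+O(|\x-\x^{\mathrm{re}}|)$ back along such an orbit, and noting that after $O(1)$ steps the orbit enters the forward-invariant box $[0,2a_1]\times[0,2a_2]$ on which $\f$ is smooth with bounded derivatives, yields $\mathrm{Lip}(\f^n)\le C\rho^n$ on a tube around the orbit, and likewise $\f^n(\x^{\mathrm{re}}+\u)=\x^{\mathrm{re}}+A^n\u+\mathbf N_n(\u)$ with $|\mathbf N_n(\u)|\le C(|\u|\rho^n)^2$ — all uniformly over $\u$ in compacts. (When $\x$ ranges over a compact of $\Real\times\Real_+$ and $n$ is large, $\x^{\mathrm{re}}+\x/\rho^n$ lies where $\f$ is defined and smooth and its first iterate lies in the box, so these estimates apply.)

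\emph{The recursion and the limit.} Write $\f(\x^{\mathrm{re}}+\x/\rho^{n+1})=\x^{\mathrm{re}}+(T\x)/\rho^{n}+\mathbf r_n(\x)$ with $|\mathbf r_n(\x)|\le C|\x|^2\rho^{-2n}$; applying $\f^n$ and using the Lipschitz bound,
\[
\Phi_{n+1}(\x)=\Phi_n(T\x)+\bdelta_n(\x),\qquad |\bdelta_n(\x)|\le C\rho^{n}|\mathbf r_n(\x)|\le C'|\x|^2\rho^{-n},
\]
uniformly on compacts. Iterating this, and using that $T$ is power-bounded,
\[
\Phi_{n+m}(\x)=\Phi_n(T^m\x)+R_{n,m}(\x),\qquad |R_{n,m}(\x)|\le C''|\x|^2\rho^{-n}\quad\text{for every }m.
\]
Now fix $\x$ in a compact and $\eps>0$, and pick $n$ with $C''|\x|^2\rho^{-n}<\eps/4$. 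Since $T^{N-n}\x\to\pi\x$ and $\Phi_n$ is continuous near $\x^{\mathrm{re}}+\pi\x/\rho^n$, letting $N\to\infty$ gives $\limsup_N|\Phi_N(\x)-\Phi_n(\pi\x)|\le\eps/2$; hence $(\Phi_N(\x))_N$ is Cauchy, and we set $\h(\x):=\lim_N\Phi_N(\x)$. Uniformity of the estimates over compacts makes the convergence uniform on compacts, so $\h$ is continuous, and $\h(\x)=\lim_n\Phi_n(\pi\x)$, whence $\h(\x)=\h(\pi\x)=\h(x_2\mathbf e_u)$. One runs this first for $|\x|\le R_0$ with $R_0$ small enough that the a priori bounds apply directly, then extends to all of $\Real\times\Real_+$ via $\Phi_{n+1}(\x)=\f(\Phi_n(\x/\rho))$, which propagates uniform convergence from $\{|\x|\le\rho^k R_0\}$ to $\{|\x|\le\rho^{k+1}R_0\}$.

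\emph{Positivity, and the main difficulty.} Passing to the limit in $\Phi_{n+1}(\x)=\f(\Phi_n(\x/\rho))$ gives $\h(\x)=\f(\h(\x/\rho))$, so $\mathbf g(t):=\h(t\mathbf e_u)$ obeys $\mathbf g(\rho t)=\f(\mathbf g(t))$, $\mathbf g(0)=\x^{\mathrm{re}}$. From $\Phi_n(t\mathbf e_u)=\x^{\mathrm{re}}+t\mathbf e_u+\mathbf N_n(t\mathbf e_u/\rho^n)$ and $|\mathbf N_n(t\mathbf e_u/\rho^n)|\le Ct^2$ one gets $\mathbf g(t)=\x^{\mathrm{re}}+t\mathbf e_u+O(t^2)$ as $t\downarrow0$, so $\mathbf g(t)$ lies in the open positive quadrant $Q=\{x_1>0,\,x_2>0\}$ for all small $t>0$. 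Since $\f(Q)\subseteq Q$ (on $Q$ each coordinate of $\f$ is a quotient of strictly positive quantities) and every $t'>0$ equals $\rho^k t$ for some $k\in\mathbb{Z}_+$ and small $t>0$, we get $\mathbf g(t')=\f^k(\mathbf g(t))\in Q$ for all $t'>0$; hence $\h(\x)=\mathbf g(x_2)$ has both entries strictly positive whenever $x_2>0$. The crux is the a priori step: a priori the orbit of $\x^{\mathrm{re}}+\x/\rho^n$ could escape every fixed neighbourhood of $\x^{\mathrm{re}}$ and $\mathrm{Lip}(\f^n)$ can be as large as $\rho^n$, so one must establish (i) $O(\rho^{k-n})$-confinement of the orbit up to time $n$ and (ii) $\mathrm{Lip}(\f^n)\le C\rho^n$, so that the $\rho^{-2n}$-size Taylor error in the recursion is beaten ($\rho^n\cdot\rho^{-2n}=\rho^{-n}\to0$) and, combined with the geometric collapse $T^m\to\pi$, the cumulative error $R_{n,m}$ stays summable uniformly in $m$.
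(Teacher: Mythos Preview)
Your argument is correct and takes a genuinely different route from the paper's.

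The paper telescopes the sequence directly: it bounds $\|\g^{n+1}(\x/\rho^{n+1})-\g^n(\x/\rho^n)\|$ by writing $\g(\x/\rho^{n+1})-\x/\rho^n=\rho^{-n}\mathbf u+\rho^{-2n}\mathbf v_n$ with $\mathbf u=(A/\rho-I)\x$ lying in the stable subspace $E_0=\{x_2=0\}$, and then propagates this through a matrix product $\prod_m(A+F_{m,n})$, expanded term by term so that the factors $A^k$ hitting $\mathbf u$ contract at rate $2^{-k}$. The a priori growth bound $\|\g^m(\x/\rho^n)\|\le\psi(\|\x\|)\rho^{m-n}$, needed to control the $F_{m,n}$, is obtained for \emph{all} $\x$ via a one-dimensional Schr\"oder-equation argument (the paper's Lemma on the quadratic recursion $x_{m,n}=\rho x_{m-1,n}(1+Cx_{m-1,n})$).

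You telescope differently: comparing $\Phi_{n+1}(\x)$ with $\Phi_n(T\x)$, $T=\rho^{-1}A$, kills the linear error entirely (since $\g(\x/\rho^{n+1})=T\x/\rho^n+O(\rho^{-2n})$), leaving only the quadratic remainder, which the Lipschitz bound $\mathrm{Lip}(\f^n)\le C\rho^n$ reduces to $O(\rho^{-n})$. The price is that you must then track the drift $T^m\x\to\pi\x=x_2\mathbf e_u$, but this is exactly the spectral projection, and it gives you for free the structural fact $\h(\x)=\h(x_2\mathbf e_u)$ that the paper only observes numerically. Your a priori bound comes from a bootstrap valid for small $|\x|$, extended to all of $\Real\times\Real_+$ via the exact relation $\Phi_{n+1}(\x)=\f(\Phi_n(\x/\rho))$; the paper's Schr\"oder argument avoids this two-stage construction. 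Both proofs rest on the same spectral gap $\tfrac12<1<\rho$ of $A$; yours packages it as $T^m\to\pi$, the paper's as the decay of $A^k$ on $E_0$ inside the product expansion.
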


\begin{figure}
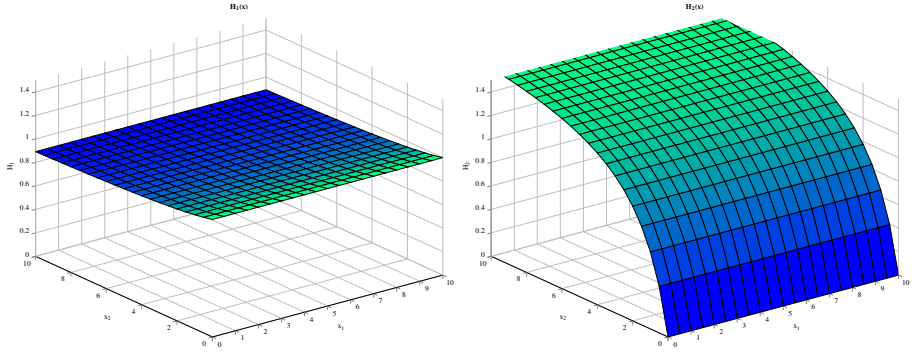
 
\input{H1versusx.tex}
\input{H2versusx.tex}
\caption{Numerical approximation of the function $\h(\x)$}
\label{fig1} 
\end{figure}

\begin{remark}   
\

\medskip 
\noindent
1) It is easy to see that $\h(\cdot)$ solves the Abel functional equation $\h(\x)=\f(\h(\x/\rho))$, subject to 
$\h(\bf 0)=x^{\mathrm{re}}$. While much is known of such equations in one dimension, in higher dimensions the 
theory is more involved.

\medskip
\noindent
2)
Numerical calculations indicate that $\h(\x)$ is constant with respect
to the resident population component $x_1$, see Figure \ref{fig1}. 
This is consistent with the criticality of that population at the density $a_1$: 
the global stability of the monomorphic dynamics makes those perturbations shrink to 0 when $\f$ is iterated.

\end{remark}

The next result describes the density process after establishment of the mutant, at times $[b\log K]+n$, $n= 1,  2, ...$; it shows that 
the population composition is governed by the deterministic nonlinear dynamics $\f^n$ started at the random point,
as illustrated on Figure \ref{fig0}.  Furthermore, it equally holds when $n$ is a negative integer.
Denote the random vector appearing in Theorem \ref{thm0} by
$$
\boldsymbol{\chi}(K) =\h\left(\rho^{-\{\log_\rho K\}}\begin{pmatrix} 0 \\ W
\end{pmatrix}
\right).
$$

\begin{corollary} \label{cor1}
For any $n\in \mathbb{Z}$,  
$$
\X^K([b\log K]+n)- \f^n (\boldsymbol{\chi}(K) ) \xrightarrow[K\to\infty]{\P}0.
$$
\end{corollary}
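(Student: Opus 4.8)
The plan is to reduce the corollary, via the Abel functional equation, to the assertion that $\X^{K}([b\log K]+n)$ converges in probability to $\h\big(\rho^{\,n-\{\log_\rho K\}}(0,W)^{\top}\big)$ for each fixed $n$, and then to prove this by a short forward induction when $n\ge 0$ and by re-running the mechanism behind Theorems~\ref{thm0} and~\ref{thmH} with a shifted establishment time when $n<0$. For the reduction, iterating the Abel functional equation $\h(\x)=\f(\h(\x/\rho))$, equivalently $\f(\h(\y))=\h(\rho\y)$, gives $\f^{k}\circ\h=\h\circ(\rho^{k}\,\cdot\,)$ for all $k\ge 0$; since $\nabla\f(\x^{\mathrm{re}})=\left(\begin{smallmatrix}1/2 & -\gamma/2\\ 0 & \rho\end{smallmatrix}\right)$ has eigenvalues $1/2,\rho$ and determinant $\rho/2\ne 0$, the map $\f$ is a local diffeomorphism near $\x^{\mathrm{re}}=\h(\mathbf 0)$, the arc $s\mapsto\h(s\y)$ ($s\ge 0$) parametrizes the unstable manifold of $\x^{\mathrm{re}}$ on which $\f$ acts as $s\mapsto\rho s$, and so $\f^{-1}$ is well defined along that branch and $\f^{k}\circ\h=\h\circ(\rho^{k}\,\cdot\,)$ persists for $k<0$. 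Writing $\y_{K}:=\rho^{-\{\log_\rho K\}}(0,W)^{\top}$, so $\boldsymbol{\chi}(K)=\h(\y_{K})$, one gets for every $n\in\mathbb Z$ that $\f^{n}(\boldsymbol{\chi}(K))=\h(\rho^{n}\y_{K})=\h\big(\rho^{\,n-\{\log_\rho K\}}(0,W)^{\top}\big)$, which is the claimed reduction.

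For $n\ge 0$ I would induct on $n$, the base case $n=0$ being Theorem~\ref{thm0}. First note that $\{\boldsymbol{\chi}(K)\}_{K}$ is tight: $W<\infty$ a.s. and $\rho^{-\{\log_\rho K\}}\in[\rho^{-1},1]$, so on $\{W\le M\}$ the point $\boldsymbol{\chi}(K)=\h(\y_{K})$ lies in the compact $\h(\{0\}\times[0,M])$, uniformly in $K$; hence $\f^{n}(\boldsymbol{\chi}(K))$ and, by the inductive hypothesis, $\X^{K}([b\log K]+n)$ are tight. Given $\eps>0$, choose a compact $C$ with $\P(\X^{K}([b\log K]+n)\notin C)<\eps$ and $\P(\f^{n}(\boldsymbol{\chi}(K))\notin C)<\eps$ for all $K$. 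From \eqref{density2d1},
\[
\X^{K}\big([b\log K]+n+1\big)=\f\big(\X^{K}([b\log K]+n)\big)+\tfrac{1}{\sqrt K}\boldsymbol{\eta}\big([b\log K]+n+1,\,\X^{K}([b\log K]+n)\big),
\]
and on $\{\X^{K}([b\log K]+n)\in C\}$ the added vector has conditional mean $\mathbf 0$ and conditional variance $O(1/K)$, hence converges to $\mathbf 0$ in probability, while uniform continuity of $\f$ on $C$ together with the inductive hypothesis yields $\f(\X^{K}([b\log K]+n))-\f^{n+1}(\boldsymbol{\chi}(K))\xrightarrow{\P}\mathbf 0$; summing the two closes the induction.

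For $n<0$ the forward recursion is unavailable, so I would reproduce the argument behind Theorem~\ref{thm0} with $[b\log K]$ replaced throughout by $[b\log K]+n$. Fix a large integer $L$ and set $m_{K}:=[b\log K]+n-L$, which is $\ge 0$ once $K$ is large. As in that proof, up to time $m_{K}$ the mutant count $Z_{2}$ is still $o(K)$ and stays close to the supercritical Galton-Watson process $Y$ of \eqref{Yn} while the resident remains near $a_{1}K$; using $\rho^{[b\log K]}/K=\rho^{-\{\log_\rho K\}}$ and $\rho^{-m}Y(m)\to W$ this should give $\X^{K}(m_{K})=\x^{\mathrm{re}}+\rho^{-L}\,\rho^{\,n-\{\log_\rho K\}}(0,W)^{\top}+o_{P}(1)$. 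Propagating the remaining $L$ steps by the deterministic approximation \eqref{clim} — which holds uniformly over starting points in a compact, applied to the tight random starting value just displayed — yields
\[
\X^{K}\big([b\log K]+n\big)-\f^{L}\!\Big(\x^{\mathrm{re}}+\rho^{-L}\,\rho^{\,n-\{\log_\rho K\}}(0,W)^{\top}\Big)\xrightarrow[K\to\infty]{\P}\mathbf 0,
\]
and by Theorem~\ref{thmH} the subtracted term tends, as $L\to\infty$ and uniformly over the tight randomness, to $\h\big(\rho^{\,n-\{\log_\rho K\}}(0,W)^{\top}\big)$; combined with the reduction this is the corollary for $n<0$.

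The case $n\ge 0$ is routine; the real work is in the case $n<0$. There one must check that the proof of Theorem~\ref{thm0} nowhere uses $[b\log K]$ being the \emph{exact} establishment time rather than an $O(1)$ perturbation of it, so that $m_{K}=[b\log K]+n-L$ still sits in the regime where the Galton-Watson coupling and the ``resident near $a_{1}$'' estimate are valid with error bounds uniform in $K$ as $L\to\infty$; and one must justify that $\f$ is invertible along the relevant arc of the unstable manifold of $\x^{\mathrm{re}}$, so that $\f^{n}$ is meaningful for $n<0$ — the Jacobian computation settles this near $\x^{\mathrm{re}}$, and the transverse contraction (eigenvalue $1/2<1$) together with the Abel relation should propagate it along the arc, but this is the point that deserves the most care.
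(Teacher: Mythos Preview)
The paper's own argument is a single sentence: once \eqref{lim2} holds, the corollary ``follows by continuity of $\f(\cdot)$''. For $n\ge 0$ your forward induction is exactly this spelled out, and it matches the paper.

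For $n<0$, your Abel-equation reduction $\f^n(\boldsymbol{\chi}(K))=\h(\rho^n\y_K)$ is correct and useful, and your final paragraph identifies the right mechanism: nothing in the proof of Theorem~\ref{thm0} depends on the terminal time being exactly $n_1=[\log_\rho K]$ rather than $n_1+n$. But the intermediate sketch with the auxiliary $L$ has a gap. You assert that the Galton--Watson coupling $Z_2\approx Y_2$ and the approximation $X_1^K\approx a_1$ persist up to time $m_K=[b\log K]+n-L\sim\log_\rho K$; in the paper these are only established up to $n_c=[c\log_\rho K]$ with $c<1$ (the exit-time bounds in Section~\ref{sec:3.3} control $\tau$ only against $n_c$), and for $n_c<m\le n_1$ the process is handled through the deterministic approximation, not the coupling. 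As a result the displayed claim $\X^K(m_K)=\x^{\mathrm{re}}+\rho^{-L}\rho^{\,n-\{\log_\rho K\}}(0,W)^\top+o_P(1)$ is unjustified, and in fact its first coordinate is wrong: by time $m_K$ the resident density has already drifted below $a_1$ by a fixed amount of order $\rho^{-L}$, which does not vanish as $K\to\infty$. Pushing such an $O(\rho^{-L})$ error forward by $\f^L$, whose Lipschitz constant is $\widetilde\rho^{\,L}$ with $\widetilde\rho>\rho$, need not give something small, so the final $L\to\infty$ step is not justified as written.

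The clean fix is the one you already hint at: drop $L$ and rerun the proof of Theorem~\ref{thm0} with terminal time $n_1+n$ in place of $n_1$. Both \eqref{showme1} and \eqref{showme2} go through verbatim (since $n_1+n-n_c\to\infty$), and the analogue of \eqref{eqeq} is
\[
\f^{\,n_1+n-n_c}\big(\overbar\Y(n_c)\big)=\f^{\,n_1+n-n_c}\Big(\x^{\mathrm{re}}+\rho^{-(n_1+n-n_c)}\,\rho^{\,n-\{\log_\rho K\}}\,\rho^{-n_c}\big(\Y(n_c)-K\x^{\mathrm{re}}\big)\Big),
\]
so Theorem~\ref{thmH} together with \eqref{inview} yields the limit $\h\big(\rho^{\,n-\{\log_\rho K\}}(0,W)^\top\big)=\f^n(\boldsymbol{\chi}(K))$ directly. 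This is presumably what the paper's one-line remark intends, and it also bypasses the invertibility question you flag at the end.
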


\begin{figure}
\begin{center}
\input{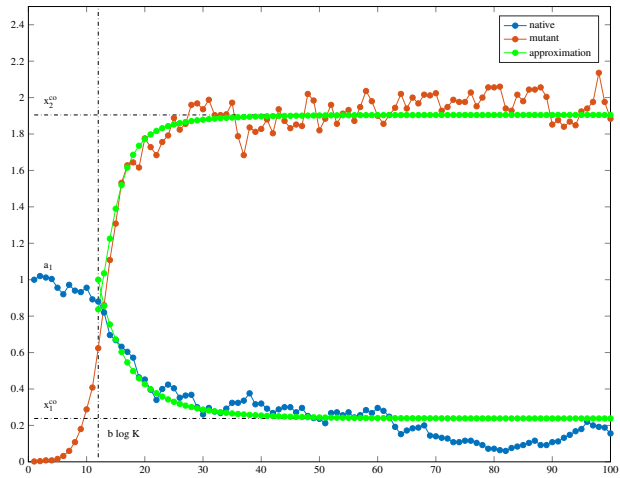}
\end{center}
\caption{ 
The density process versus the approximation with the random initial condition  
}
\label{fig0}
\end{figure}

The next corollary to Theorem \ref{thm0} answers the question what is the probability of the successful establishment of the mutant? Since the argument is short we present it here.
It is known that $\P(W=0)$ is exactly the extinction probability of the Galton-Watson process $Y(n)$. It is easily calculated to be $2/\rho-1$. But on the event $\{W=0\}$,
$\h\big((0,W)\big)=\h((0,0))=\x^{\mathrm{re}}$. 
Since on the complimentary event $W>0$, and $\h_2((0,w)) >0$ for $w>0$, we have the following corollary of \eqref{lim2}.
\begin{corollary} 
With probability $2(1-\rho^{-1})$ the mutant establishes itself alongside the large original population.  
 \end{corollary}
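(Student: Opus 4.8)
The plan is to read the establishment event directly off Theorem~\ref{thm0}. Recall that the mutant is said to \emph{establish} itself when $\liminf_{K\to\infty} X_2^K([b\log K])>0$. By \eqref{lim2},
\[
X_2^K\big([b\log K]\big) = \chi_2(K) + o_\P(1), \qquad \chi_2(K)=\h_2\!\left(\rho^{-\{\log_\rho K\}}(0,W)^{\top}\right),
\]
where $W=\lim_{n\to\infty}\rho^{-n}Y(n)$ is the ($K$-independent) martingale limit of the Galton--Watson process \eqref{Yn}. Since $W$ is fixed, the whole question hinges on the dichotomy $\{W=0\}$ against $\{W>0\}$: first I would show that these two events are, respectively, the failure and the success of establishment, and then I would compute $\P(W>0)$.

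On $\{W=0\}$ the argument of $\h$ equals $(0,0)$ for every $K$, and $\h(0,0)=\lim_{n\to\infty}\f^n(\x^{\mathrm{re}})=\x^{\mathrm{re}}=(a_1,0)$ because $\x^{\mathrm{re}}$ is a fixed point of $\f$; hence $\chi_2(K)\equiv 0$ and $X_2^K([b\log K])\xrightarrow[K\to\infty]{\P}0$, so the mutant does not establish. On $\{W>0\}$ the point $\rho^{-\{\log_\rho K\}}(0,W)^{\top}$ ranges, as $K$ varies, over the compact set $\{0\}\times[\rho^{-1}W,\,W]$, which lies in the open half-plane $\Real\times\Real_+$ on which, by assumption, the entries of $\h$ are strictly positive. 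By the uniform-on-compacts convergence of Theorem~\ref{thmH} the map $\h$ is continuous, so $c(W):=\min_{t\in[\rho^{-1}W,W]}\h_2(0,t)>0$ and $\chi_2(K)\ge c(W)$ for all $K$; combined with $X_2^K([b\log K])=\chi_2(K)+o_\P(1)$ this gives $\liminf_{K\to\infty}X_2^K([b\log K])\ge c(W)>0$ on $\{W>0\}$. Since \eqref{lim2} is convergence in probability, the clean way to phrase this last step is: for each $\delta>0$, $\liminf_{K\to\infty}\P\big(X_2^K([b\log K])>\delta\big)\ge\P\big(c(W)>2\delta\big)$, and then let $\delta\downarrow0$.

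It remains to compute $\P(W>0)$. The process $Y(n)$ of \eqref{Yn} is a Galton--Watson process with a single ancestor and offspring supported on $\{0,2\}$, probability generating function $\phi(s)=(1-\rho/2)+(\rho/2)s^2$ and mean $\phi'(1)=\rho$. Because the offspring law has finite support the $L\log L$ condition is automatic, so by the Kesten--Stigum theorem $\E W=1$, $W>0$ almost surely on non-extinction, and $\P(W=0)$ is the extinction probability $q$ of $Y$. The coexistence condition \eqref{coex} forces $\rho=\tfrac{2a_2}{a_2+\gamma a_1}\in(1,2)$ (since $a_2>\gamma a_1$ and $\gamma a_1>0$), so $\phi$ is strictly convex with $\phi(0)=1-\rho/2>0$ and $\phi'(1)>1$; hence $q$ is the unique root of $\phi(s)=s$ in $[0,1)$. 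Solving $(\rho/2)s^2-s+(1-\rho/2)=0$ gives roots $1$ and $2/\rho-1$, so $q=2/\rho-1$ and therefore $\P(W>0)=1-q=2-2\rho^{-1}=2(1-\rho^{-1})$, which is the asserted probability of establishment.

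No genuine obstacle arises in this corollary: Theorem~\ref{thm0}, together with the positivity of $\h_2$ on $\Real\times\Real_+$ from Theorem~\ref{thmH}, carries essentially all the weight. The only two points that need a word of care are the Kesten--Stigum step identifying $\{W=0\}$ with extinction of $Y$ (immediate here because the offspring distribution is finitely supported) and the passage from the convergence in probability in \eqref{lim2} to the $\liminf$ in the definition of establishment, handled by the quantitative reformulation displayed above.
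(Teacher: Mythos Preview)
Your proof is correct and follows the same approach as the paper's argument, which appears in the paragraph immediately preceding the corollary: identify $\{W=0\}$ with extinction of the Galton--Watson process $Y(n)$, compute the extinction probability as $2/\rho-1$, and use the positivity of $\h_2$ on $\Real\times\Real_+$ from Theorem~\ref{thm0}. You supply considerably more detail than the paper does---in particular the continuity of $\h$ via Theorem~\ref{thmH}, the compactness argument handling the fractional part $\{\log_\rho K\}$, and the careful passage from convergence in probability to the $\liminf$ formulation of establishment---but the underlying idea is identical.
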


\section{Proofs}

\subsection{A preview}\label{ssec:3.1}
The proof is inspired by the observation that  supercritical populations, which start from a small number of individuals 
and develop on a habitat with a large but bounded capacity, grow initially as the Galton-Watson supercritical branching 
and then follow closely a deterministic curve, determined by the underlying nonlinear dynamics. This heuristics dates back at least 
to 50's, e.g., \cite{Kendall56} and \cite{Whittle55}, and the  already mentioned \cite{Metz}. A rigorous proof for epidemics is given in \cite{Mollison}, and in a wider context the rigorous implementations are relatively recent, see   
\cite{BHKK15}, \cite{BCK16} and \cite{CJK18}. 

Let us briefly sketch the ideas. The main ingredient is the Galton-Watson branching process $\Y(n)$,
whose components mimic the behaviour of those of $\Z(n)$ at the moment of mutant's appearance, that is, around the equilibrium 
point $\x^{\mathrm{re}}$. Thus its first, resident component $Y_1(n)$ is critical and starts at $[a_1K]$ and its second, mutant
component $Y_2(n)$ is supercritical with the offspring mean $\rho$, and it starts from a single individual. 
The two processes $\Z(n)$ and $\Y(n)$ are constructed on the same probability space and are coupled in such a way, 
that they remain close at least until time $n_c = [\log_\rho K^c]$ with some fixed constant $c\in (1/2,1)$.

Following the above heuristics the density $\X(n) =   \Z(n)/K =: \overbar \Z(n)$ is approximated by gluing the linearized 
stochastic process with the deterministic nonlinear dynamics,
$$
\widetilde \Z(n) = \begin{cases}
\overbar \Y(n) & n\le n_c \\
\f^{n-n_c}\big(\overbar \Y(n_c)\big) & n>n_c
\end{cases}
$$	
where $\overbar \Y(n) := \Y(n)/K$ is the density of the Galton-Watson branching. 
The assertion \eqref{lim2} of Theorem \ref{thm0} follows if we show that 

\medskip

\begin{enumerate}
\addtolength{\itemsep}{0.7\baselineskip}

\item the process $\widetilde \Z(n)$ does indeed approximate the target density $\overbar \Z(n)$
at time $n=[\log_\rho K]=\big[\frac 1 {\log\rho } \log K\big]=n_1$,  
\begin{equation}
\label{showme2}
\overbar{\Z}(n_1) - \widetilde \Z(n_1)  \xrightarrow[K\to\infty]{\P} 0,
\end{equation}

\item and the approximation $\widetilde \Z(n)$ behaves asymptotically as claimed in Theorem \ref{thm0},  
\begin{equation}
\label{showme1}
\widetilde \Z(n_1) -
\h\left(\rho^{-\{\log_\rho K\}}\begin{pmatrix}
0 \\ W
\end{pmatrix}
\right) \xrightarrow[K\to\infty]{\P}0.
\end{equation}

\end{enumerate}
 
\medskip

The main technical difficulty in proving \eqref{showme2} is to control the difference $\overbar{\Z}(n) - \widetilde \Z(n)$
on the time interval $[0,n_1]$, which itself grows with $K$. It turns out that the usual 
technique, based on straightforward linearization of the dynamics, does not provide bounds sharp enough in this case. 
Instead we construct a suitable coupling in Section \ref{sec:3.3}, which involves several additional auxiliary Galton-Watson processes. 

The key to the limiting expression in \eqref{showme1} is the representation 
\begin{equation}\label{eqeq}
\begin{aligned}
&
\f^{n_1-n_c} \big(\overbar \Y(n_c)\big) = \\
&
\f^{n_1-n_c} \Big(\x^{\mathrm{re}}+ \rho^{-\{\log_\rho K\}}\rho^{-(n_1-n_c)}\rho^{-n_c}\big( \Y(n_c)-K \x^{\mathrm{re}} \big)\Big).
\end{aligned}   
\end{equation}
It shows that the convergence in Theorem  \ref{thm0} follows once we prove the limit of Theorem \ref{thmH} 
and check that 
\begin{equation}\label{inview}
\rho^{-n_c}\big( \Y(n_c)-K \x^{\mathrm{re}} \big)\xrightarrow[K\to\infty]{\P}(0,W).
\end{equation}

The random variable $W$ is the martingale limit of the supercritical branching $Y_2(n)$, cf. \eqref{Yn}.
The most challenging element of the proof of this part is convergence \eqref{limH}, see Section \ref{sec:3.2} below.
Previously, it has been proved in \cite{CJK18} in dimension one, and analysis in higher dimensions, in our case two, 
requires a completely different approach. When convergence \eqref{lim2} is proved, the assertion of Corollary \ref{cor1} 
follows by continuity of $\f(\cdot)$.

The limit in equation \eqref{lim0} can be proved in a similar way: note that in this case, cf. \eqref{eqeq}
\begin{align*}
&
\f^{n_r-n_c} \big(\overbar \Y(n_c)\big) = \\
&
\f^{n_r-n_c} \Big(\x^{\mathrm{re}}+ \rho^{-\{\log_\rho K\}}\rho^{-(n_r-n_c)}\rho^{-(n_1-n_r)}\rho^{-n_c}\big( \Y(n_c)-K \x^{\mathrm{re}} \big)\Big),
\end{align*} 
where, in view of \eqref{inview},
$$
\rho^{-(n_1-n_r)}\rho^{-n_c}\big( \Y(n_c)-K \x^{\mathrm{re}} \big)\xrightarrow[K\to\infty]{\P} 0.
$$
We omit the proof of this part, which closely follows that of Theorem \ref{thm0} with obvious adjustments.

\subsection{The limit $\mathbf{H(x)}$} \label{sec:3.2}
 
In this section we construct limit \eqref{limH}, by means of a convergent telescopic series.

\subsubsection{An auxiliary recursion in dimension one} 
Let us start with an auxiliary one  dimensional quadratic recursion 
\begin{equation}\label{xrec}
x_{m,n} = \rho x_{m-1,n}\big(1+C x_{m-1,n}\big), \quad m = 1,...,n
\end{equation}
subject to initial condition $x_{0,n}=x/\rho^n$ with $x>0$, where $C\ge 0$ and $\rho>1$ are constant coefficients. 
In what follows we will need the following estimate on its solution.

\begin{lemma}\label{lem-main} There exists a finite function $\psi:\Real_+\mapsto \Real_+$, such that 
\begin{equation}\label{bnd}
x_{m,n} \le \psi(x) \rho^{m-n}, \quad m=1,...,n.
\end{equation}
\end{lemma}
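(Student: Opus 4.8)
The first thing to observe is that the recursion \eqref{xrec} is monotone increasing in the initial datum, so it suffices to bound the solution from above by the solution of a slightly more tractable majorizing recursion. The natural strategy is to pass to the ``martingale-normalized'' variable $u_{m,n} := \rho^{n-m} x_{m,n}$, for which \eqref{xrec} becomes
\begin{equation*}
u_{m,n} = u_{m-1,n}\bigl(1 + C x_{m-1,n}\bigr) = u_{m-1,n}\bigl(1 + C \rho^{m-1-n} u_{m-1,n}\bigr),
\end{equation*}
with $u_{0,n} = x$. The claim \eqref{bnd} is exactly the statement that $u_{m,n}$ stays bounded by a finite $\psi(x)$ uniformly in $m \le n$ and in $n$. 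Since $u_{m,n}$ is increasing in $m$, it is enough to show $\sup_{n}\sup_{m\le n} u_{m,n} \le \psi(x) < \infty$, and by monotonicity of the whole scheme it suffices to treat the ``worst case'' where we replace $x_{m-1,n}$ in the correction factor by its own upper bound.

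The key step is then a telescoping/product estimate. Writing $\log u_{m,n} - \log x = \sum_{k=1}^{m} \log\bigl(1 + C\rho^{k-1-n} u_{k-1,n}\bigr) \le C \sum_{k=1}^{m} \rho^{k-1-n} u_{k-1,n}$, one sees that the increments are controlled by a geometric weight $\rho^{k-1-n}$ whose sum over $k \le n$ is bounded by $\rho/(\rho-1)$, a constant independent of $n$. This suggests a bootstrap: if we knew $u_{k-1,n} \le M$ for all $k \le m$ and some constant $M$ depending only on $x$, then $\log u_{m,n} \le \log x + CM\rho/(\rho-1)$, i.e. $u_{m,n} \le x\exp\bigl(CM\rho/(\rho-1)\bigr)$. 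The plan is to make this self-consistent: define $\psi(x)$ as a solution (or an explicit majorant of a solution) of the fixed-point inequality $M \ge x\exp\bigl(C\rho M/(\rho-1)\bigr)$. For $x$ small enough relative to $C$ and $\rho$ such an $M$ exists directly; for general $x>0$ one instead runs the argument on a short initial block of generations (using that $x_{m,n}=x\rho^{m-n}$ is still tiny, of order $x\rho^{m-n}$, as long as $m$ is not too close to $n$) to reach a generation at which $u$ is still comparable to $x$ but the remaining geometric tail $\sum_{k>m_0}\rho^{k-1-n}$ is as small as we like, and then close the bootstrap on the tail. Either way one produces a finite $\psi(x)$, and one can even track that $\psi(x)\sim x$ as $x\downarrow 0$.

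I expect the main obstacle to be organizing the bootstrap so that it is genuinely uniform in $n$: the subtlety is that the correction term at step $k$ carries the factor $\rho^{k-1-n}$, which is small only because $k$ is far below $n$, so a naive induction on $m$ for fixed $n$ must be set up with the right ``reverse'' bookkeeping (the dangerous steps are the last few, $k$ near $n$, but there the accumulated geometric weight of all earlier steps is still bounded). A clean way around this is to prove the contrapositive: let $m^\ast$ be the first index where $u_{m,n}$ exceeds a candidate bound $\psi(x)$; up to $m^\ast-1$ the bootstrap inequality applies verbatim and yields $u_{m^\ast,n} \le x\exp\bigl(C\rho\psi(x)/(\rho-1)\bigr)$, which is $< \psi(x)$ by the choice of $\psi$, a contradiction. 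This makes the induction trivial and avoids any delicate ordering argument. A final minor point to check is finiteness and the correct qualitative behaviour of $\psi$ near $0$, which follows by elementary analysis of the transcendental inequality $M \ge x e^{cM}$.
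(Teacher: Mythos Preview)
Your normalization $u_{m,n}=\rho^{n-m}x_{m,n}$ and the telescoping bound
\[
\log u_{m,n}\le \log x + C\sum_{k=1}^{m}\rho^{k-1-n}u_{k-1,n}
\]
are correct, and the contrapositive bootstrap does produce a finite $\psi(x)$ \emph{provided the fixed-point inequality $M\ge x\exp(cM)$, $c=C/(\rho-1)$, has a solution}. But $M\mapsto Me^{-cM}$ is bounded above by $1/(ce)$, so the inequality has \emph{no} solution once $x>(\rho-1)/(Ce)$. This is exactly the regime your ``initial block'' fix is meant to cover, and that fix does not work as written: the weights $\rho^{k-1-n}$ are largest for $k$ near $n$, so the tail $\sum_{k>m_0}\rho^{k-1-n}$ is \emph{not} small for any choice of $m_0\le n$; it always contains the term $k=n$ of size $\rho^{-1}$ and is bounded below by a fixed positive constant. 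Running the first $m_0$ generations (where indeed $u_{m_0,n}\approx x$) only re-poses the same bootstrap with the same constant $c$, so you are back to the same obstruction for large $x$. In short, the Gronwall/bootstrap route hits a genuine barrier at $x\asymp(\rho-1)/C$ that no re-indexing removes.

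The paper bypasses this by linearizing the iteration exactly rather than by inequalities: with $p(y)=\rho y(1+Cy)$ and its inverse $f=p^{-1}$, the Koenigs--Schr\"oder limit $\phi(y)=\lim_n \rho^n f^n(y)$ exists (via Seneta's theorem, using concavity of $f$ and the integrability of $|f(y)-y/\rho|/y^2$ near $0$), is finite and strictly increasing on $(0,\infty)$ with $\phi'(0^+)=1$, and conjugates $p$ to multiplication by $\rho$. Then
\[
x_{n,n}=p^n(x/\rho^n)=\phi^{-1}\bigl(\rho^n\phi(x/\rho^n)\bigr)\xrightarrow[n\to\infty]{}\phi^{-1}(x),
\]
which is finite for \emph{every} $x>0$, yielding $\sup_n x_{n,n}<\infty$ and hence the bound with $\psi(x)=\sup_n x_{n,n}$. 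If you want to salvage an elementary argument, the reciprocal identity $1/u_{m,n}=1/u_{m-1,n}-C\rho^{m-1-n}/(1+C\rho^{m-1-n}u_{m-1,n})$ gives $1/u_{m,n}\ge 1/x-C/(\rho-1)$, but this again only works for $x<(\rho-1)/C$; to cover all $x$ you really need the conjugacy (or an equivalent exact linearization).
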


\begin{proof}
Let us first note that no generality will be lost if $C=1$ is assumed. Indeed, if \eqref{xrec} is multiplied by $C$, 
the recursion 
$$
\widetilde x_{m,n} = p(\widetilde x_{m-1,n}), \quad m=1,...,n
$$
with $p(x)= \rho x(1+x)$ is obtained for the rescaled sequence $\widetilde x_{m,n} = C x_{m,n}$.
Hence if \eqref{bnd} holds for $\widetilde x_{m,n}$ with some $\widetilde \psi(x)$, then it holds for $x_{m,n}$ 
with $\psi(x) := C^{-1} \widetilde \psi(Cx)$. From here on we set $C=1$.

Since  
$
x_{m,n} \ge \rho x_{m-1,n}, 
$
proving the desired bound amounts to showing  
\begin{equation}\label{sup}
\sup_n x_{n,n}<\infty, \quad \forall x\ge 0.
\end{equation}
To this end, consider the Schr\"oder functional equation 
\begin{equation}\label{Schroder}
\phi(f(x))=s\phi(x), \quad x\in [0,\infty)
\end{equation}
where $s=:1/\rho \in(0,1)$ and $f(x)=\sqrt{\frac 1 4 + sx}-\frac 1 2$ is the inverse of the parabola $p(\cdot)$ on $\Real_+$. 
The function $f(x)$ satisfies the following conditions 

\begin{enumerate}

\item $f$ is continuous and strictly increasing on $[0,\infty)$

\item $f(0)=0$ and $0<f(x)<x$ for $0<x<\infty$

\item $f(x)/x\to s$ as $x\to 0+$

\item $f(x)$ is concave (and therefore $f(x)/x$ is decreasing on $\Real_+$)

\item for all $\delta >0$   
$$
\int_0^\delta \frac{|f(x)-sx|}{x^2}dx<\infty.
$$

\end{enumerate}

Under these conditions \cite{S69} shows that the limit 
$$
\phi(x):= \lim_n \frac{f^n(x)}{s^n}, \quad x\in [0,\infty)
$$
exists, solves \eqref{Schroder} and satisfies the following properties 

\begin{enumerate}
\item[a)] $0<\phi(x)<\infty$ on $(0,\infty)$ (nontrivial limit), $\phi(0+)=0$

\item[b)]  $\phi(x)/x$ is monotone on $(0,\infty)$

\item[c)] 
$
\phi'(0+)=1
$

\item[d)] $\phi(x)$ is invertible\footnote{see the remark in the paragraph following (3.1) in \cite{S69}}

\end{enumerate}
Changing the variable  in \eqref{Schroder} to  $y=f(x)$, we get
$$
\phi(y)=s\phi(p(y)), \quad y\in \Real_+,
$$
and, inverting, we obtain the conjugacy  
$$
p(y) = \phi^{-1}\big(\rho \phi(y)\big).
$$ 
Hence 
$$
x_{n,n} = p^n(x/\rho^n) = \phi^{-1}\big(\rho^n \phi(x/\rho^n)\big)\xrightarrow[n\to\infty]{}  \phi^{-1}\big(x \phi'(0+)\big) = \phi^{-1}(x).
$$
In particular, \eqref{sup} and, therefore also \eqref{bnd}, hold. \qed
\end{proof}

\subsubsection{Properties of $\f(\cdot)$.}
Let us summarize some relevant properties of the function $\f(\cdot)$, which governs the deterministic dynamics in \eqref{map2d}. 
Since $f_1(\x)-x_1$ and $f_2(\x)-x_2$ change signs across the lines $x_1+\gamma x_2 =a_1$ and $\gamma x_1 + x_2 =a_2$ respectively, as shown at the phase portrait Figure \ref{fig3}, the coexistence equilibrium $\x^{\mathrm{co}}$ is globally stable. The local behaviour around the unstable fixed point $\x^{\mathrm{re}}=(a_1,0)$ is determined 
the Jacobian of $\f(\cdot)$ at $\x^{\mathrm{re}}$,
\begin{equation}\label{Adef}
A:= D   \f( \x^{\mathrm{re}})
=\begin{pmatrix}
\frac 1 2 & -\frac{ \gamma }{2}\\
0 & \phantom{+}\rho 
\end{pmatrix}, \ \text{where}\;\;\rho = m_{2}(\x^{\mathrm{re}})=\frac{2 a_2 }{ a_2+\gamma a_1 }.
\end{equation}
%

\begin{figure}[t]
\begin{center}
\input{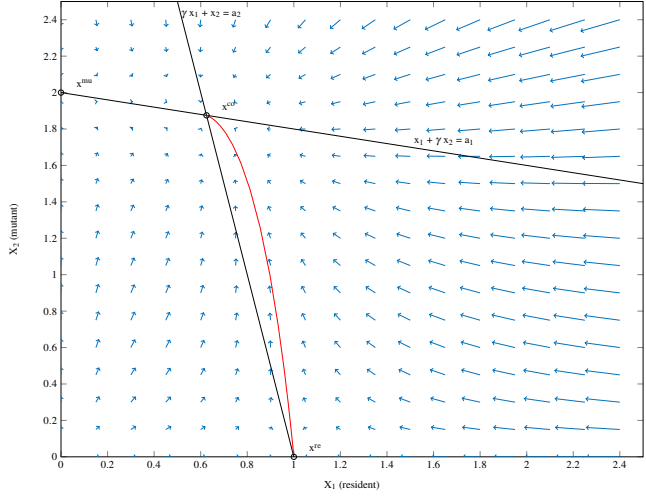}
\end{center}
\caption{
The phase-portrait of the deterministic system \eqref{map2d}; the trajectory from a small vicinity of the resident equilibrium
$\x^{\mathrm{re}}$ to the coexistence equilibrium $\x^{\mathrm{co}}$ is depicted in red.
}
\label{fig3} 
\end{figure}

To study perturbations around $\x^{\mathrm{re}}$ it will be convenient to consider the translation 
\begin{equation}\label{eqng}
\g(\x):= \f(\x^{\mathrm{re}}+\x)-\x^{\mathrm{re}},
\end{equation}
with $\g(\mathbf{0})=\mathbf{0}$ and Jacobian $D\,\g(\mathbf{0})=A$. 
In particular, existence of the limit \eqref{limH} follows from that of $\lim_{n\to\infty} \g^n(\x)$.

Note that for $x_1\in [x^{\mathrm{co}}_1, \infty)$ and $x_2 \in [0,x^{\mathrm{co}}_2]$, formulas \eqref{dynf} for the entries of the function 
$\f(\cdot)$ and the configuration of the fixed points \eqref{stableFP} imply 
$$
f_1( \x ) =   \frac{2x_1a_1}{a_1+x_1+\gamma x_2} \ge  \frac{2x^{\mathrm{co}}_1a_1}{a_1+x^{\mathrm{co}}_1+\gamma x^{\mathrm{co}}_2}=x^{\mathrm{co}}_1 
$$
and 
$$
f_2(\x)  = \frac{2x_2a_2}{a_2+\gamma x_1+ x_2} \le \frac{2x_2^{\mathrm{co}}a_2}{a_2+\gamma x_1^{\mathrm{co}}+ x_2^{\mathrm{co}}}=x_2^{\mathrm{co}}.
$$
Hence the subset $\widetilde E :=  [x^{\mathrm{co}}_1, \infty)\times [0,x^{\mathrm{co}}_2]\subset\Real^2_+$ is forward invariant under $\f(\cdot)$, namely, 
$\f(\widetilde E)\subseteq \widetilde E$.
Then by \eqref{eqng} the subset 
$$
E = [x^{\mathrm{co}}_1-a_1, \infty)\times [0,x^{\mathrm{co}}_2] \subset \Real\times \Real_+
$$ 
is forward invariant under $\g(\cdot)$. 
 
In what follows, $\|\cdot \|$ stands for the $\ell_\infty$ norm for vectors and 
the corresponding operator norm for matrices. 
In particular, the matrix $A$ defined in \eqref{Adef} satisfies $\|A\|= \rho>1$.
The linear subspace $E_0 =\{\x\in \Real^2: x_2=0\}$ is invariant under $A$ and 
\begin{equation}\label{normA}
\sup_{\x\in E_0} \frac{\|A\x\|}{\|\x\|}=\frac 1 2.
\end{equation}
Below $C$, $C_1$, etc. stand for constants, which depend  only on $a_1$, $a_2$ and $\gamma$
and whose values may change from line to line. 

The first coordinate of $\g(\cdot)$ can be written as   
\begin{align*}
g_1(\x)  = & \frac{2 a_1 (a_1+x_1) }{2a_1 +x_1+\gamma x_2}-a_1 = \frac{   a_1x_1   }{2a_1 +x_1+\gamma x_2}
-
\frac{  a_1\gamma x_2}{2a_1 +x_1+\gamma x_2} =\\
&
\frac 1 2 x_1 \left( 
1
- \frac{   x_1+\gamma x_2   }{2a_1 +x_1+\gamma x_2} \right)
-\frac{\gamma} 2 x_2\left(1-
\frac{  x_1+\gamma x_2}{2a_1 +x_1+\gamma x_2}
\right),
\end{align*}
and, similarly,  
\begin{align*}
g_2(\x) =  \rho x_2\left(1-    \frac{      x_2 +\gamma x_1}{a_2 +\gamma  a_1+  x_2+\gamma x_1 }\right). 
\end{align*}

Hence $\g(\x)$ has the form 
\begin{equation}\label{grec}
\g(\x) = \big(I-B(\x)\big)A \x
\end{equation}
where matrix $B(\x)$ satisfies the bound
\begin{equation}\label{Bbnd}
\|B(\x)\|\le C \|\x\|, \quad \x\in E
\end{equation}
with a constant $C$.
Similar calculation also shows that for $\x,\y\in E$
\begin{equation}\label{gxgy}
g(\x)-g(\y) = \big(A+F(\x,\y)\big)(\x-\y)
\end{equation}
where matrix $F(\x,\y)$ satisfies 
\begin{equation}\label{Fbnd}
\|F(\x,\y)\| \le C \big(\|\x\| \vee \|\y\|\big).
\end{equation}

These formulas and the bound from Lemma \ref{lem-main} give the following growth estimate.

\begin{lemma}
For  any $\x\in  \Real\times \Real_+$ and all $n$ large enough, 
\begin{equation}\label{gbnd}
\big\|\g^m(\x/\rho^n)\big\| \le \psi(\|\x\|) \rho^{m-n}, \quad m=1,...,n
\end{equation}
with a finite function $\psi(r)$, $r\ge 0$.
\end{lemma}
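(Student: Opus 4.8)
The plan is to deduce \eqref{gbnd} from the one-dimensional estimate of Lemma \ref{lem-main}. The starting point is the multiplicative representation \eqref{grec}: combining it with the bound \eqref{Bbnd} and with $\|A\|=\rho$ gives, for every $\y$ in the forward-invariant set $E$,
\[
\|\g(\y)\| = \big\|(I-B(\y))A\y\big\| \le \big(1+\|B(\y)\|\big)\|A\|\,\|\y\| \le \rho\,\|\y\|\big(1+C\|\y\|\big).
\]
This is precisely one step of the scalar recursion \eqref{xrec}, with the same constants $\rho$ and $C$, applied to the number $\|\y\|$. So, provided the whole orbit $\y_m:=\g^m(\x/\rho^n)$ stays inside $E$, a straightforward induction — using that $t\mapsto\rho t(1+Ct)$ is non-decreasing on $\Real_+$ and that $\|\y_0\|=\|\x/\rho^n\|=\|\x\|/\rho^n$ — yields $\|\y_m\|\le x_{m,n}$, where $x_{m,n}$ solves \eqref{xrec} with $x_{0,n}=\|\x\|/\rho^n$. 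Lemma \ref{lem-main}, applied with $x=\|\x\|$, then gives $x_{m,n}\le\psi(\|\x\|)\rho^{m-n}$ for $m=1,\dots,n$, which is exactly \eqref{gbnd}; the case $\x=\mathbf 0$ is trivial since $\g(\mathbf 0)=\mathbf 0$.

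What remains is to justify that the orbit stays in $E$, and this is where the hypothesis ``$n$ large enough'' enters. First I would observe that $E=[x_1^{\mathrm{co}}-a_1,\infty)\times[0,x_2^{\mathrm{co}}]$ contains a neighbourhood of the origin in the half-plane $\Real\times\Real_+$: the coexistence condition \eqref{coex} gives $x_1^{\mathrm{co}}-a_1=-\gamma(a_2-\gamma a_1)/(1-\gamma^2)<0$ and $x_2^{\mathrm{co}}=(a_2-\gamma a_1)/(1-\gamma^2)>0$. Hence, for each fixed $\x\in\Real\times\Real_+$ there is $n_0=n_0(\x)$ with $\x/\rho^n\in E$ for all $n\ge n_0$ — this is the precise meaning of ``for all $n$ large enough''. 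Since $E$ is forward invariant under $\g(\cdot)$ (established above), it follows that $\y_m\in E$ for every $m\ge 0$, so the inductive comparison of the previous paragraph is valid on the whole range $m=1,\dots,n$.

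I do not anticipate any real obstacle: this lemma is essentially a repackaging step transporting Lemma \ref{lem-main} to the planar map $\g$. The two mildly delicate points are that the admissible $n$ depends on $\x$ (through $\|\x\|$ and $a_1,a_2,\gamma$), which is why the estimate is only asymptotic in $n$, and that the comparison with \eqref{xrec} must be monotone, which it is. It is worth noting that the argument deliberately discards the sharper contraction \eqref{normA} available along $E_0$; the crude operator bound $\|A\|=\rho$ is all that is needed here, and the finer directional structure will be exploited only later, when the telescopic series defining $\h$ is put together.
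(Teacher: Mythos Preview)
Your proof is correct and follows essentially the same approach as the paper: reduce $\|\g^m(\x/\rho^n)\|$ to the scalar recursion \eqref{xrec} via the bound $\|\g(\y)\|\le\rho\|\y\|(1+C\|\y\|)$ on the invariant set $E$, then invoke Lemma \ref{lem-main}. You have in fact been more explicit than the paper in justifying why $\x/\rho^n\in E$ for large $n$ and in noting the monotonicity needed for the inductive comparison.
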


\begin{proof}
For any $\x\in \Real\times \Real_+$ and all $n$ large enough $\x/\rho^n\in E$ and, since $E$ is invariant, $\g^m(\x/\rho^n)\in E$
for all $m$. Hence by \eqref{grec}, the sequence $\x_{m,n}=\g^m(\x/\rho^n)$ satisfies 
\begin{align*}
\|\x_{m+1,n}\| = & \| \g\big(\x_{m,n}\big) \|  = \big\|
\big(I-B(\x_{m,n})\big)A \x_{m,n}
\big\| \le \\
& \| I-B(\x_{m,n})\|\|A \|\|\x_{m,n}\| \le \rho\|\x_{m,n}\| (1+C \|\x_{m,n}\|).
\end{align*}
By induction $\|\x_{m,n}\|\le x_{m,n}$, where $x_{m,n}$  solves \eqref{xrec} subject to $x_{0,n}=\|\x\|/\rho^n$, 
and  the claim  follows from  Lemma \ref{lem-main}.\qed
\end{proof}

\subsubsection{Proof of Theorem \ref{thmH} } 

We will argue that the increments of $\g^n(\x/\rho^n)$ are absolutely summable, uniformly over compacts in $\Real\times \Real_+$. 
Let $n$ be large enough so that $\x/\rho^n\in E$ and therefore, by invariance, $\g^{m}(\x/\rho^n)\in E$ for all $m\ge 1$.
Consider the array 
$$
\g^{m}(\x/\rho^{n+1}) - \g^{m-1}(\x/\rho^n), \quad m=1,...,n.
$$
For $m=1$, due to  \eqref{grec}, 
\begin{align*}
\g (\x/\rho^{n+1})- \x/\rho^n =\, & A \x/\rho^{n+1}- \x/\rho^n-B(\x/\rho^{n+1})A \x/\rho^{n+1} = \\
&
\rho^{-n}\big( A/\rho  -I\big)\x    + \rho^{-2n} \mathbf{v}_n =: \rho^{-n} \mathbf{u}  + \rho^{-2n} \mathbf{v}_n,
\end{align*}
where $\mathbf{u}\in E_0$ and, in view of \eqref{Bbnd}, $\mathbf{v}_n$ is a sequence of vectors, whose norm is bounded 
uniformly in $n$.
Both vectors $\mathbf{u}$ and $\mathbf{v}_n$ depend continuously on $\x$, which is omitted from the notations.
For $m\ge 1$,  \eqref{gxgy} implies 
\begin{align*}
& 
\g^{m+1}(\x/\rho^{n+1}) - \g^{m}(\x/\rho^n) = \g\circ \g^{m} (\x/\rho^{n+1}) - \g\circ \g^{m-1}(\x/\rho^n) = \\
&
\Big(A + F\big(\g^m (\x/\rho^{n+1}),\g^{m-1}(\x/\rho^n)\big)\Big)\Big(\g^m (\x/\rho^{n+1})-\g^{m-1}(\x/\rho^n)\Big)
\end{align*}
and, letting $
F_{m,n} := F\big(\g^m (\x/\rho^{n+1}),\g^{m-1}(\x/\rho^n)\big) 
$
,  we get
\begin{equation}\label{D1}
\g^{n+1}(\x/\rho^{n+1}) - \g^n(\x/\rho^n) = 
\left\{\prod_{m=1}^n
\Big(A + F_{m,n}\Big)
\right\}
\big(\rho^{-n} \mathbf{u} + \rho^{-2n} \mathbf{v}_{n}\big).
\end{equation}
 
Since  $\|A\|=\rho$, by virtue of \eqref{gbnd} and \eqref{Fbnd}
\begin{equation}\label{2ndterm}
\begin{aligned}
&
\left\|
\left\{\prod_{m=1}^n
\big(A + F_{m,n}\big)
\right\}
 \rho^{-2n} \mathbf{v}_{n} 
\right\| \le 
\rho^{-2n}\|\mathbf{v}_{n}\| \prod_{m=1}^n \Big(\big\|A \big\|+ \big\|F_{m,n} \big\|\Big) \le  \\
&
\rho^{-2n} C_1 \prod_{m=1}^n \Big(\rho+ C  \big(\|\g^m (\x/\rho^{n+1})\|\vee \|\g^{m-1}(\x/\rho^n)  \|\big)\Big)\le \\
&
\rho^{-2n}C_1 \prod_{m=1}^n \big(\rho+   C_2 \rho^{m-n}\big)= 
\rho^{- n}C_1 \prod_{m=1}^n \big(1+   C_2 \rho^{m-n-1}\big)\le C_3\rho^{- n},
\end{aligned}
\end{equation}
where constants $C_j$'s depend continuously on $\|x\|$.

Let us now bound the remaining term in \eqref{D1}. To this end, observe that
\begin{align*}
\prod_{m=1}^n\Big(A + F_{m,n}\Big) = & \prod_{m=2}^n\Big(A + F_{m,n}\Big) F_{1,n} + \\
&
\prod_{m=3}^n\Big(A + F_{m,n}\Big) F_{2,n} A + \\
& 
\prod_{m=4}^n\Big(A + F_{m,n}\Big) F_{3,n} A^2 + \cdots +\\
& \Big(A + F_{n,n}\Big)F_{n-1,n}A^{n-2} + F_{n,n} A^{n-1} + A^n.
\end{align*}
Since $\mathbf{u}\in E_0$ and $E_0$ is invariant under $A$, we have $\|A^k\mathbf{u}\|\le (1/2)^k\|\mathbf{u}\|$ due to
\eqref{normA}.  Therefore for all $k=0,...,n-2$
\begin{align*}
&
\Big\|\prod_{m=k+2}^n\Big(A + F_{m,n}\Big) F_{k+1,n} A^{k} \mathbf{u} \Big\|
\le \\
&
(1/2)^k\|\mathbf{u}\|  \Big\|\prod_{m=k+2}^n\Big(A + F_{m,n}\Big) F_{k+1,n}   \Big\|\le \\
&
C_1 (1/2)^k \|F_{k+1,n}\|    \prod_{m=k+2}^n\Big(\|A\| + \|F_{m,n}\|\Big) \le \\
&
 C_2 (1/2)^k\|\x\|\rho^{k-n}    \prod_{m=k+2}^n\Big(\rho +  C  \|\x\|\rho^{m-n}\Big) \le \\
&
 C_3 (1/2)^k      \prod_{m=k+2}^n\Big(1 +  C_4\rho^{m-n}\Big)\le \\
&
 C_3    (1/2)^k \exp \Big(  C_4\sum_{m=k+2}^n \rho^{m-n} \Big)\le C_5 (1/2)^k ,
\end{align*}
where all $C_j$'s depend continuously on $\x$.
Consequently
$$
\left\|\left\{
\prod_{m=1}^n
\Big(A + F_{m,n}\Big)
\right\}
\rho^{-n} \mathbf{u}   
\right\|\le  C_6  \rho^{-n}.
$$
Plugging this and \eqref{2ndterm} into \eqref{D1} yields 
$$
\big\|\g^{n+1}(\x/\rho^{n+1}) - \g^n(\x/\rho^n)\big\| \le C_7  \rho^{-n},
$$ 
where $C_7$ depends continuously on $\x$. This implies that $\g^n(\x/\rho^n)$ converges as $n\to\infty$, uniformly on compacts. 
Existence of the limit $\mathbf{H}(\x)$ in \eqref{limH} now follows from \eqref{eqng}, and Theorem \ref{thmH} is proved.\qed

\subsection{The main approximation}\label{sec:3.3}
In this section we construct the random variable $W$ and prove  
convergence \eqref{lim2}. To this end, let $U(n,j)$ and $V(n,j)$ be i.i.d.
random variables distributed uniformly over the unit interval $[0,1]$ and define $\xi_1(n,j)$ and $\xi_2(n,j)$ in \eqref{Zprocess} 
as 
\begin{equation}\label{UVrv}
\begin{aligned}
\xi_1(n,j)=\, & 2\cdot {\mathbf 1}\bigg\{U(n,j) \le \frac{a_1K}{a_1K+Z_1(n)+\gamma Z_2(n)}\bigg\}, \\
\xi_2(n,j)  =\,  & 2\cdot {\mathbf 1}\bigg\{V(n,j) \le \frac{a_2K}{a_2K+\gamma Z_1(n)+  Z_2(n)}\bigg\}.
\end{aligned}
\end{equation}
Define Galton-Watson branching process $\Y(n)$ with components  
\begin{align*}
Y_1(n+1)   =& \sum_{j=1}^{Y_1(n) } 2\cdot {\mathbf 1}\bigg\{U(n,j) \le \frac 1 2\bigg\}, \\ 
Y_2(n+1)   =& \sum_{j=1}^{Y_2(n)} 2\cdot {\mathbf 1}\bigg\{V(n,j) \le \frac 12 \rho  \bigg\},
\end{align*}
subject to $Y_1(0) = [a_1 K]$ and $Y_2(0) = 1$, and the corresponding density process $\overbar\Y(n)=\Y(n)/K$. 
Note that $Y_2(n)$ coincides in distribution with the process defined in \eqref{Yn}
and 
\begin{equation}
\label{Y2n}
\rho^{-n} Y_2(n)\xrightarrow[n\to\infty]{\P}W.
\end{equation}
Finally, for a fixed constant $c\in (\frac 1 2,1]$ define    
$$
n_c(K) := [\log_\rho K^c] = \left[\frac{c}{\log \rho}\log K\right].
$$ 
In particular, $n_1(K) = [b\log K] = \big[ \log_\rho K\big]$, cf. Theorem \ref{thm0}.  

\medskip As explained in Section \ref{ssec:3.1}, the limit \eqref{lim2} follows from  \eqref{showme2} and \eqref{showme1}.

\subsubsection{Proof of \eqref{showme1}}

Since 
$
\E  Y_1(n)=[a_1 K]  
$
and 
$
\var  \big(Y_1(n)\big) \le n a_1K
$ we have
\begin{equation} 
 \E\big(  Y_1(n_c)-a_1 K \big)^2 \le   a_1 K  \log_\rho K^c
\end{equation}
and hence for any $c\in (\frac 1 2,1)$, 
\begin{equation}\label{K1c}
\rho^{-n_c} \big(    Y_1(n_c)-  a_1 K \big)\xrightarrow[K\to\infty]{\P} 0.
\end{equation}
This along with \eqref{Y2n} implies \eqref{inview}, and in view of  
representation \eqref{eqeq}, the limit in \eqref{showme1} follows  
by the continuous mapping theorem and the uniform convergence in \eqref{limH}.\qed

\subsubsection{Proof of \eqref{showme2}.}
Since 
$$
\big\|\overbar{\Z}(n_1) - \widetilde \Z(n_1)\big\|  \le\,  
\big\|\overbar{\Z}(n_1)-\f^{n_1-n_c} (\overbar{\Z}(n_c))\big\|+ 
\big\|\f^{n_1-n_c} (\overbar{\Z}(n_c))- \f^{n_1-n_c}(\overbar{\Y}(n_c))\big\|
$$
it suffices to prove that
\begin{equation}\label{showme21}
\big\|\overbar{\Z}(n_1)-\f^{n_1-n_c} \big(\overbar{\Z}(n_c)\big)\big\|\xrightarrow[K\to\infty]{\P} 0
\end{equation}
and 
\begin{equation}\label{showme22}
\big\|\f^{n_1-n_c} \big(\overbar{\Z}(n_c)\big)- \f^{n_1-n_c}\big(\overbar \Y(n_c)\big)\big\|\xrightarrow[K\to\infty]{\P} 0.
\end{equation}

Let us first prove \eqref{showme21}.
Recall that the density process $\X(n)=\overbar{\Z}(n)$ solves  \eqref{density2d1},
$$
\overbar{\Z}(n) = \f\big(\overbar{\Z}(n-1)\big) +\frac 1 {\sqrt K} \boldsymbol{\eta}(n),
$$
and hence the difference $\bdelta(n) := \overbar{\Z}(n)-\f^{n-n_c} (\overbar{\Z}(n_c))$  satisfies 
$$
\bdelta(n)  = \f\big(\overbar{\Z}(n-1)\big) -\f\big(\overbar{\Z}(n-1)-\bdelta(n-1)\big) +
\frac 1 {\sqrt K} \boldsymbol{\eta}(n), \quad n> n_c
$$
subject to $\bdelta(n_c)=0$. A direct calculation shows that the Jacobian of $\f(\cdot)$ is bounded
$$
\widetilde \rho := \sup_{\x\in \Real^2_+}\|D\, \f(\x)\|_\infty\in (\rho, 2],
$$ 
Hence $\f(\cdot)$ is $\widetilde \rho$-Lipschitz on $\Real_+^2$ with respect to $\ell_\infty$ norm and  
$$
\|\bdelta(n)\| \le \widetilde\rho\, \|\bdelta(n-1)\|+ \frac 1 {\sqrt{K}} \|\boldsymbol{\eta}(n)\|.
$$
Let $\beta:= \log_\rho \widetilde \rho > 1$, then 
\begin{align*}
\E \|\bdelta(n_1)\| & \le 
\frac 1{\sqrt K} \sum_{j=n_c}^{n_1} \widetilde {\rho}^{\, n_1-j} \E \|\boldsymbol{\eta}(j)\|\le   \\
&
\frac 1{\sqrt K}    (n_1-n_c)\widetilde\rho^{n_1-n_c} \sup_{j\le n_1}\E \|\boldsymbol{\eta}(j)\|\le 
C   K^{(1-c)\beta -\frac 1 2}\log_{\rho}K^{1-c} \to 0,
\end{align*}
where  convergence holds if $c$ is chosen close enough to $1$. 
This proves \eqref{showme21}. 

To check \eqref{showme22},
write 
\begin{align*}
&
\Big\|\f^{n_1-n_c} \big(\overbar{\Z}(n_c)\big)- \f^{n_1-n_c}\big(\overbar{\Y}(n_c)\big)\Big\|=  \\
&
\Big\|   \f^{n_1-n_c}  \Big(\x^{\mathrm{re}}+\rho^{-\{\log_\rho K\}}\rho^{-(n_1-n_c)}  \rho^{-n_c}(\Z(n_c)-K\x^{\mathrm{re}}) \Big) \\
&
 -\f^{n_1-n_c} \Big(\x^{\mathrm{re}}+\rho^{-\{\log_\rho K\}}\rho^{-(n_1-n_c)}  \rho^{-n_c}(\Y(n_c)-K\x^{\mathrm{re}}) \Big)
\Big\|.
\end{align*}
Since, by \eqref{Y2n} and \eqref{K1c}, the sequence $\rho^{-n_c}(\Y(n_c)-K\x^{\mathrm{re}})$ converges  
to $(0,W)$ in probability and, by Theorem \ref{thmH}, the sequence $\f^n(\x^{\mathrm{re}}+\x/\rho^n)$ converges uniformly on compacts to $\h(\x)$, it suffices to show that 
$$
\rho^{-n_c} \|\Z(n_c)-\Y(n_c)\|\xrightarrow[K\to\infty]{\P}0,
$$
that is,
\begin{equation}
\label{checkme}
K^{-c}\big|Z_j(n_c)-Y_j(n_c)\big| \to 0,\quad j=1,2,
\end{equation} 
where $c\in (\frac 1 2, 1)$ has been already fixed in the previous calculations. 
We prove \eqref{checkme} for $j=2$, omitting the similar proof for the case $j=1$.

To this end, choose arbitrary constants  
$$
\alpha_{1\ell}, \alpha_{1u}, \alpha_2 \in (c,1)\subset (\tfrac 1 2 ,1), 
$$
and, using the same random variables  $U(n,j)$ and $V(n,j)$ as in \eqref{UVrv}, define two 
additional auxiliary Galton-Watson branching processes $\L(n)$ and $\U(n)$ with the entries 
\begin{align*}
& 
L_1(n) = \sum_{j=1}^{L_1(n-1)}2\cdot {\mathbf 1}\bigg\{U(n,j) \le  \frac 1 2 r^-_K  \bigg\}, \quad L_1(0)=[a_1K], \\
&
U_1(n) = \sum_{j=1}^{U_1(n-1)}2\cdot {\mathbf 1}\bigg\{U(n,j) \le  \frac 1 2 r^+_K  \bigg\}, \quad U_1(0)=[a_1K],
\end{align*}
where  
$$
r^-_K := \frac{2a_1}{a_1+ a_1(1+ K^{\alpha_{1u}-1})+  \gamma K^{\alpha_2-1}} < 1
\quad
\text{and}
\quad 
r^+_K := \frac {2a_1} {a_1+ a_1(1- K^{\alpha_{1\ell}-1}) } > 1,
$$
and 
\begin{align*}
& 
L_2(n) = \sum_{j=1}^{L_2(n-1)}2\cdot {\mathbf 1}\bigg\{V(n,j) \le \frac 12 \rho^-_K  \bigg\}, \quad L_2(0)=1, \\
&
U_2(n)  = \sum_{j=1}^{U_2(n-1)}2\cdot {\mathbf 1}\bigg\{V(n,j) \le \frac 12 \rho^+_K  \bigg\}, \quad U_2(0) =1,
\end{align*}
where 
$$
\rho^-_K = \frac{2 a_2}{a_2+\gamma a_1 (1+K^{\alpha_{1u}-1})  +K^{\alpha_2-1}}< \rho
\quad
\text{and}
\quad
\rho^+_K = \frac{ 2a_2}{a_2+\gamma a_1 (1-K^{\alpha_{1\ell}-1})}>\rho.
$$
 
Define the exit times
\begin{equation}\label{taus}
\begin{aligned}
\tau^{1\ell}  
&
= \min\big\{n:  Z_1(n)  \le  a_1 (K-K^{\alpha_{1\ell} }) \big\},
\\
\tau^{1u}  
&
= \min\big\{n:  Z_1(n) \ge  a_1 (K+K^{\alpha_{1u} }) \big\}, 
\\
\tau^2  
&
= \min\big\{n: Z_2(n)\ge K^{\alpha_2 }  \big\}.
\end{aligned}
\end{equation}
The random variable 
$
\tau = \tau^{1\ell} \wedge \tau^{1u} \wedge \tau^2 
$ 
is a coupling time for the above processes, in the sense that
$$
\P\big(L_2(n) \le Y_2(n)\le U_2(n)\big)=1, 
$$ 
and 
$$
\{\tau\ge n\}\subseteq \big\{L_2(n)   \le Z_2(n)\le U_2(n)\big\}.
$$
Hence  
$$
\big|Z_2(n) - Y_2(n)\big|\le 
 \big(U_2(n) - L_2(n)\big)\one{\tau\ge n}
+ \big|Z_2(n) - Y_2(n)\big|\one{\tau< n}.
$$
Convergence \eqref{checkme} for $j=2$ holds, if we show that 
\begin{equation}\label{raz}
K^{-c}\big(U_2(n_c) - L_2(n_c)\big)\xrightarrow[n\to\infty]{\P} 0,
\end{equation}
and 
\begin{equation}\label{PK}
 \P(\tau < n_c)\xrightarrow[K\to\infty]{}0,
\end{equation}
since $\big\{K^{-c}\big|Z_2(n) - Y_2(n)\big|\one{\tau<n}\ge \eps\big\}\subseteq \{\tau < n\}$ for any $\eps>0$.

\

The limit in \eqref{raz} holds, since 
\begin{align*}
&
K^{-c} \E \big(U_2(n_c)- L_2(n_c)\big) \le   
  (\rho^+_K/\rho)^{n_c}-(\rho^-_K/\rho)^{n_c}  \le  \frac{\rho^+_K-\rho^-_K}{\rho} \big(\rho^+_K/\rho\big)^{n_c}n_c
\le \\
& C\,     \Big|K^{\alpha_{1\ell}-1}+K^{\alpha_{1u}-1}+K^{\alpha_2-1}\Big|   
\big(1+ O(K^{\alpha_{1\ell}-1})\big)^{\log_\rho K^c}\log_\rho K^c \xrightarrow[K\to\infty]{}0.
\end{align*}

It is left to prove \eqref{PK}. Since  
$$
\P(\tau < n_c) \le \P(\tau^{1\ell} <n_c) + \P(\tau^{1u} <n_c) + \P(\tau^2 <n_c),
$$
it suffices to check that each of the exit times, c.f. \eqref{taus}, 
\begin{align*}
\tau^{1\ell}(\alpha)  & = \min\big\{n:  Z_1(n)  \le  a_1 (K-K^{\alpha }) \big\},\\
\tau^{1u}(\alpha) & = \min\big\{n:  Z_1(n) \ge  a_1 (K+K^{\alpha }) \big\}, 
\\
\tau^2 (\alpha) & = \min\big\{n: Z_2(n)\ge K^{\alpha }  \big\},
\end{align*}
satisfy  
\begin{subequations}
\begin{align}
\label{pr3}
&
\P\big(\tau^{1\ell}(\alpha)\le n_c \big)\xrightarrow[K\to\infty]{} 0,  \\
&
\label{pr2}
\P\big(\tau^{1u}(\alpha)\le n_c \big)\xrightarrow[K\to\infty]{} 0, \\
&
\label{pr1}
\P\big(\tau^{2}(\alpha)\le n_c \big)\xrightarrow[K\to\infty]{} 0, 
\end{align}
\end{subequations}
for {\em any} $\alpha\in ( c , 1)$. 
To this end, we can reuse the auxiliary processes $\L(n)$ and $\U(n)$, defined above, with appropriately chosen 
parameters $\alpha_{1\ell}$, $\alpha_{1u}$ and $\alpha_2$.
Define exit times
\begin{align*}
\sigma^2 &= \min\{n: U_2(n)\ge K^{\alpha_2}  \}, \\
\sigma^{1u} &= \min\{n: U_1(n) \ge a_1(K+K^{\alpha_{1u}})\}, \\
\sigma^{1\ell} &= \min\{n: L_1(n)   \le a_1(K-K^{\alpha_{1\ell}})\}.
\end{align*}

To prove \eqref{pr1}, we can choose $\alpha_{1\ell} = \alpha_{1u}=\alpha_2 = \alpha$.
By construction, 
$$
\{\tau^2(\alpha) <n_c\}\subseteq \{\sigma^2< n_c\},$$
and therefore 
\begin{align*}
&
\P(\tau^2(\alpha) < n_c)\le \P(\sigma^2<n_c) =  \P\left(\max_{n\le n_c}U_2(n)\ge K^{\alpha_2}\right) = \\
&
\P\left((\rho_K^+)^{-n_c}\max_{n\le n_c}U_2(n)\ge(\rho_K^+)^{-n_c}K^{\alpha_2}\right)\le \\
&
\P\left(\max_{n\le n_c}(\rho_K^+)^{-n} U_2(n)\ge(\rho_K^+)^{-n_c}K^{\alpha_2}\right)\stackrel{\dagger}{\le} 
(\rho_K^+)^{n_c}K^{-\alpha_2} \le C K^{c-\alpha_2}\xrightarrow[K\to\infty]{} 0,
\end{align*} 
where $\dagger$ holds by Doob's inequality \cite[Theorem VII.3.3, p. 493, eq. (11)]{Shbook}, applied to the martingale $(\rho_K^+)^{-n} U_2(n)$.

To prove \eqref{pr2}, let us choose $c < \alpha_{1u}=\alpha_2 < \alpha_{1\ell}=\alpha$. 
By construction, 
$$
\{\tau^{1u}(\alpha) < n_c\}\subseteq \{\sigma^{1u}<n_c\},
$$ and since 
the process $[a_1K]-L_1(n)$ is a submartingale, 
\begin{align*}
&
\P(\tau^{1\ell}(\alpha)<n_c)\le \P(\sigma^{1\ell}<n_c) =  \P\left(\min_{n\le n_c}L_1(n)\le a_1 (K-K^{\alpha_{1\ell}})\right) = \\
& 
\P\left(\max_{n\le n_c}\big([a_1K] -L_1(n)\big)\ge a_1 K^{\alpha_{1\ell}} -1\right)\stackrel{\dagger}{\le} C K^{-\alpha_{1\ell}}\E \big|[a_1K]-L_1(n_c)\big|
\stackrel{\ddagger}{\le} \\
&
C K^{-\alpha_{1\ell}} \big| [a_1K] -\E L_1(n_c)\big|
+
C K^{-\alpha_{1\ell}}\sqrt{\var \big( L_1(n_c)\big)},
\end{align*}
where $\dagger$ is another variant of Doob's inequality \cite[Theorem VII.3.1, p. 492, eq. (1)]{Shbook}  and $\ddagger$ 
holds by the Jensen inequality. 
The first term  satisfies
\begin{align*}
&
  K^{-\alpha_{1\ell}} \big|[a_1K]-\E L_1(n_c)\big|
= 
 K^{-\alpha_{1\ell}} [a_1K] \big|1-  (r^-_K)^{n_c}\big|\le \\
&
  C_1 K^{1-\alpha_{1\ell}}    \Big|1-   \Big(1-  K^{\alpha_{1u}-1}-  \frac{\gamma}{a_1}K^{\alpha_2-1}\Big)^{n_c}\Big| \le\\
& C_2  K^{1-\alpha_{1\ell}} \big(K^{\alpha_{1u}-1}+K^{\alpha_2-1}\big) n_c  \le C_3 
  \big(K^{\alpha_{1u}-\alpha_{1\ell}} + K^{\alpha_2-\alpha_{1\ell}} \big) \log_\rho  K \xrightarrow[K\to\infty]{}0,
\end{align*}
where the convergence holds by the choice  $\alpha_{1u}=\alpha_2<\alpha_{1\ell}$. 
The second term satisfies
$$
K^{-\alpha_{1\ell}}\sqrt{\var ( L_1(n_c))} \le K^{-\alpha_{1\ell}}\sqrt{ a_1 K n_c (r^-_K)^{n_c}}\le C K^{\frac 1 2-\alpha_{1\ell}}\log_\rho  K 
\xrightarrow[K\to\infty]{}0.
$$
Finally, to prove \eqref{pr3}, let us choose $c< \alpha_2 = \alpha_{1\ell} < \alpha_{1u}=\alpha$, then 
\begin{align*}
&
\P\big(\tau^{1u}(\alpha)< n_c \big) \le \P(\sigma^{1u}< n_c) = \P\left(\max_{n\le n_c}U_1(n)  \ge a_1 (K+K^{\alpha_{1u}})\right) \le \\
&
\P\left(\max_{n\le n_c}\big((r^+_K)^{-n} U_1(n) -[a_1K]\big )\ge 
[a_1K] ((r^+_K)^{-n_c}-1)+a_1(r^+_K)^{-n_c}K^{\alpha_{1u}}  \right)\stackrel{\dagger}{\le} \\
&
C_1 K^{-\alpha_{1u}}  \E \big|(r^+_K)^{-n_c} U_1(n_c) -[a_1K]\big| \le C_2 K^{-\alpha_{1u}}  \sqrt{\var(U_1(n_c))}  \le  \\
&
C_3 K^{-\alpha_{1u}}  \sqrt{a_1K n_c (r^+_{K})^{2n_c}}  \le C_4 K^{\frac 1 2-\alpha_{1u}}\log_\rho K^c\xrightarrow[K\to\infty]{}0,
\end{align*}
where $\dagger$ holds since $\alpha_{1u} > \alpha_{1\ell}$ and we used Doob's inequality as before. 
This verifies \eqref{PK} and, in turn, \eqref{checkme} for $j=2$. 
The proof for $j=1$ is done similarly and  \eqref{showme22} follows. This completes the proof of \eqref{showme2}.\qed

\begin{acknowledgements}
We are grateful to referees for comments and suggestions that lead to improvement of the paper.
\end{acknowledgements}


\end{document}